\newtheorem{thm}{Theorem}[section]
\newtheorem{lemma}[thm]{Lemma}
\theoremstyle{definition}
\theoremstyle{remark}
\newtheorem{rem}[thm]{Remark}
\let\c@equation\c@thm
\numberwithin{equation}{section}
\title[Convex Hypersurfaces of Prescribed Curvature ]{ Convex Hypersurfaces with Prescribed Scalar Curvature and Asymptotic Boundary in Hyperbolic Space}
\thanks{The author is supported by the grant (no. AUGA5710000618) from Harbin Institute of Technology and
National Natural Science Foundation of China (No. 12001138).}
\author{Zhenan Sui}
\address{Institute for Advanced Study in Mathematics of HIT, Harbin Institute of Technology, Harbin, China}
\email{sui.4@osu.edu}
\begin{document}

\begin{abstract}
The existence of a smooth complete strictly locally convex hypersurface with prescribed scalar curvature and asymptotic boundary at infinity in $\mathbb{H}^{3}$ is proved under the assumption that there exists a strictly locally convex subsolution.
\end{abstract}

\maketitle


\section {\large Introduction}

\vspace{6mm}

In this paper, we are concerned with the asymptotic Plateau type problem in hyperbolic space $\mathbb{H}^{n+1}$: to find a complete strictly locally convex hypersurface $\Sigma$ with prescribed curvature and asymptotic boundary at infinity. For hyperbolic space, we will use the half-space model
\[\mathbb{H}^{n+1} = \,\{ (x, x_{n+1}) \in \mathbb{R}^{n+1} \,\big\vert\,x = (x_1, \ldots, x_n) \in \mathbb{R}^n, \, x_{n+1} > 0 \} \]
equipped with the hyperbolic metric
\[ d s^2 = \frac{1}{x_{n+1}^2} \,\sum_{i = 1}^{n+1} \, d x_i^2. \]
The ideal boundary at infinity of $\mathbb{H}^{n+1}$ can be identified with
\[\partial_{\infty} \mathbb{H}^{n+1} = \mathbb{R}^n = \mathbb{R}^n \times \{0\} \,\subset \mathbb{R}^{n+1}\]
and the asymptotic boundary $\Gamma$ of $\Sigma$ is given at $\partial_{\infty} \mathbb{H}^{n+1}$, which consists of a disjoint collection of smooth closed embedded $(n - 1)$ dimensional submanifolds $\{ \Gamma_1, \ldots, \Gamma_m \}$. Given a positive function
$\psi \in C^{\infty} (\mathbb{H}^{n+1})$,
we are interested in finding a complete strictly locally convex hypersurfaces $\Sigma$ in $\mathbb{H}^{n+1}$ satisfying the curvature equation
\begin{equation}  \label{eqn1}
f (\kappa) = \,\sigma_k^{1/k} ( \kappa ) = \psi^{1/k}( {\bf x} )
\end{equation}
as well as with the asymptotic boundary
\begin{equation} \label{eqn2}
\partial \Sigma = \Gamma,
\end{equation}
where ${\bf x}$ is a conformal Killing field which will be specified in section 6, $\kappa = (\kappa_1, \ldots, \kappa_n)$ are the hyperbolic principal curvatures of $\Sigma$ at ${\bf x}$, and
\[\sigma_k (\lambda) = \sum\limits_{1 \leq i_1 < \ldots < i_k \leq n} \,\lambda_{i_1} \cdots \lambda_{i_k}\]
is the $k$-th elementary symmetric function defined on $k$-th G\r arding's cone
\[\Gamma_k \equiv \{ \lambda \in \mathbb{R}^n \vert \, \sigma_j (\lambda) > 0,\,\, j = 1, \ldots, k \}. \]
$\sigma_k (\kappa)$ is the so called $k$-th Weingarten curvature of $\Sigma$. In particular, the $1$st, $2$nd and $n$-th Weingarten curvature correspond to mean curvature, scalar curvature and Gauss curvature respectively. We call a hypersurface $\Sigma$ strictly locally convex (locally convex) if all principal curvatures at any point of $\Sigma$ are positive (nonnegative).

In this paper, all hypersurfaces are assumed to be connected and orientable.
We will see from Lemma \ref{LemmaV} that a strictly locally convex hypersurface in $\mathbb{H}^{n+1}$ with compact (asymptotic) boundary  must be a vertical graph over a bounded domain in $\mathbb{R}^n$. We thus assume the normal vector field on $\Sigma$ to be upward. Write
\[ \Sigma =\, \{ ( x, \,u(x) ) \in \mathbb{R}^{n+1}_+  \,\big\vert \, x \in \Omega \}, \]
where $\Omega$ is the bounded domain on $\partial_{\infty} \mathbb{H}^{n+1} = \mathbb{R}^n$ enclosed by $\Gamma$. Consequently, \eqref{eqn1}--\eqref{eqn2} can be expressed in terms of $u$,
\begin{equation} \label{eqn9}
\left\{ \begin{aligned} f ( \kappa [\, u \,] )\, =  & \,\,\, \psi^{\frac{1}{k}}(x,\, u) \quad\quad & \mbox{in} \quad \Omega, \\ u \,  = & \,\,\, 0 \quad \quad & \mbox{on} \quad \Gamma. \end{aligned} \right.
\end{equation}

The essential difficulty for the Plateau type problem \eqref{eqn9} is due to the singularity at $u = 0$. When $\psi$ is a positive constant, problem \eqref{eqn9} has been extensively investigated in \cite{GS00, GSS09, GS10, GS11, GSX14} (see also the references therein for some previous work). Their basic idea is: first, to prove the existence of a solution $u^{\epsilon}$ to the approximate Dirichlet problem
\begin{equation} \label{eqn11}
\left\{ \begin{aligned} f ( \kappa [\, u \,] )\, =  & \,\,\, \psi^{\frac{1}{k}}(x,\, u) \quad\quad & \mbox{in} \quad \Omega, \\ u \,  = & \,\,\, \epsilon \quad \quad & \mbox{on} \quad \Gamma, \end{aligned} \right.
\end{equation}
and then, to show these $u^{\epsilon}$ converge to a solution of \eqref{eqn9} after passing to a subsequence.
For general $\psi$,  Szapiel \cite{Sz05} studied the existence of strictly locally convex solutions to \eqref{eqn11} for $f = \sigma_n^{1/n}$, but he also assumed a very strong assumption on $f$ (see (1.11) in \cite{Sz05}) which excluded the case $f = \sigma_n^{1/n}$. As far as the author knows, there is no literature which gives an existence result for the asymptotic Plateau type problem \eqref{eqn9} for general $\psi$.

Our first task in this paper is to improve the result of \cite{Sz05}.
As in \cite{GS04}, we assume the existence of a strictly locally convex  subsolution $\underline{u} \in C^4(\Omega)$, that is,
\begin{equation} \label{eqn8}
\left\{ \begin{aligned} f ( \kappa [\, \underline{u} \,] )\, \geq  & \,\,\, \psi^{\frac{1}{k}}(x, \,\underline{u} ) \quad\quad & \mbox{in} \quad \Omega, \\ \underline{u} \,  = & \,\,\, 0 \quad \quad & \mbox{on} \quad \Gamma. \end{aligned} \right.
\end{equation}
Different from \cite{GSS09, GS10, GS11, GSX14, Sz05}, we take a new approximate Dirichlet problem
\begin{equation} \label{eqn10}
\left\{\begin{aligned} f ( \kappa [\, u \,] )\, =  & \,\,\, \psi^{\frac{1}{k}}(x,\, u) \quad\quad & \mbox{in} \quad \Omega_{\epsilon}, \\ u \,  = & \,\,\, \epsilon \quad \quad & \mbox{on} \quad \Gamma_{\epsilon}, \end{aligned} \right.
\end{equation}
where the $\epsilon$-level set of $\underline{u}$ and its enclosed region in $\mathbb{R}^n$ are respectively
\[ \Gamma_{\epsilon} = \,\{ x \in \Omega \,\big\vert\,\, \underline{u}(x) \, = \,\epsilon\, \} \quad \mbox{and} \quad
\Omega_{\epsilon} = \,\{ x \in \Omega \,\big\vert\,\, \underline{u}(x) \, > \,\epsilon \, \}. \]
We may assume the dimension of $\Gamma_{\epsilon}$ is $(n - 1)$ by Sard's theorem, and in addition, $\Gamma_{\epsilon} \in C^4$.

A crucial step for proving the existence of a strictly locally convex solution to \eqref{eqn10} is to establish second order a priori estimates for strictly locally convex solutions $u$ of \eqref{eqn10} satisfying $u \geq \underline{u}$ on $\Omega_{\epsilon}$. An essential difference from \cite{GSS09, GS10, GS11, GSX14} is that we allow  the $C^2$ bound to depend on $\epsilon$. This looser requirement gives us more flexibility to apply techniques for general Dirichlet problem and with less technical assumptions (for example, there is no prescribed upper bound for $\psi$). For $C^2$ boundary estimates, we change the variable from $u$ to $v$ by $u = \sqrt{v}$ (see \cite{Sui18} for a similar idea for radial graphs), which is the main difference from \cite{GSS09, Sz05} and fundamentally improves the result in \cite{Sz05}.

One reason that we purely study strictly locally convex hypersurfaces is due to $C^2$ boundary estimates.  In \cite{GS10}, Guan-Spruck assumed $\Gamma$ to be mean convex. Then the solution $u$ behaves nicely near $\Gamma$ and therefore $k$-admissible solutions can be studied in their framework. However, without any geometric assumptions on $\Gamma_{\epsilon}$, $C^2$ boundary estimates can only be obtained for strictly locally convex hypersurfaces.

In order to apply continuity method and degree theory to prove the existence of a strictly locally convex solution to \eqref{eqn10}, the strict local convexity has to be preserved during the continuity process. This is true when $k = n$ in view of the nondegeneracy of \eqref{eqn10}, while for $1 \leq k < n$, we have to impose certain assumptions on $\Omega$, $\underline{u}$ and $\psi$ to guarantee the full rank of the second fundamental form on locally convex $\Sigma$ up to the boundary. In this paper, we want to apply the constant rank theorem developed in \cite{KL87, GM03, GLM06} to Dirichlet boundary value problems when assuming a subsolution. For this, we assume
\begin{equation} \label{eqn12}
\left\{ \Big( \frac{\underline{u}}{f(\kappa [\underline{u}])} \Big)_{x_{\alpha} x_{\beta}} \right\}_{n \times n} \,\geq 0,
\end{equation}
\begin{equation}  \label{eqn13}
 \left(
         \begin{array}{cc}
           \frac{k+1}{k} \frac{\psi_{x_\alpha} \psi_{x_\beta}}{\psi} - \psi_{x_{\alpha} x_{\beta}}  - \frac{k \psi}{u^2} \delta_{\alpha\beta} + \frac{\psi_u}{u} \delta_{\alpha\beta}
            & \frac{k+1}{k} \frac{\psi_{x_\alpha} \psi_{u}}{\psi} - \psi_{x_{\alpha} u} - \frac{\psi_{x_\alpha}}{u} \\
           \frac{k+1}{k} \frac{\psi_{x_\alpha} \psi_{u}}{\psi} - \psi_{x_{\alpha} u} - \frac{\psi_{x_\alpha}}{u} &
          \frac{k+1}{k} \frac{\psi_{u}^2}{\psi} - \psi_{u u} - \frac{k \,\psi}{u^2} - \frac{\psi_u}{u} \\
         \end{array}
       \right) \,\geq 0.
\end{equation}
Besides, we also need a condition which can guarantee that locally convex solutions to the associated equations of \eqref{eqn10} are strictly locally convex near the boundary $\Gamma_{\epsilon}$. However, we did not find such a condition. Therefore, our existence results are limited to $k = n$.

\begin{thm} \label{Theorem1}
Under the subsolution condition \eqref{eqn8}, for $k = n$,
there exists a smooth strictly locally convex solution $u^{\epsilon}$ to the Dirichlet problem \eqref{eqn10} with $u^{\epsilon}  \geq \underline{u}$ in $\Omega_{\epsilon}$.
\end{thm}

Our second task in this paper is to solve \eqref{eqn9}. A central issue is to provide certain uniform $C^2$ bound for $u^\epsilon$. Different from \cite{GSS09, GS10, GS11, GSX14}, where the authors derived uniform bound for certain quantities regarding solutions of \eqref{eqn11} under certain assumptions,  we use \eqref{eqn10} as an approximate Dirichlet problem and tolerate the $\epsilon$-dependent $C^2$ bound for solutions to \eqref{eqn10}, since we are able to use the idea of Guan-Qiu \cite{GQ17}, who established $C^2$ interior estimates for convex hypersurfaces with prescribed scalar curvature in $\mathbb{R}^{n+1}$. We extend their estimates to $\mathbb{H}^{n+1}$, which, together with Evans-Krylov interior estimates (see \cite{Evans, Krylov}) and standard diagonal process, lead to the following existence result.  Since the pure $C^2$ interior estimates can only be derived up to scalar curvature equations (see Pogorelov \cite{Po78} and Urbas \cite{Ur90} for counterexamples when $k \geq 3$), we hope to investigate the cases $k \geq 3$ in future work by other means. Meanwhile, interior $C^2$ estimates are limited to hypersurfaces satisfying certain convexity property (see \cite{GQ17}), which also explains why we only focus on strictly locally convex hypersurfaces.

\begin{thm} \label{Theorem2}
In $\mathbb{H}^3$, for $f = \sigma_2^{1/2}$, under the subsolution condition \eqref{eqn8},
there exists a smooth strictly locally convex solution $u \geq \underline{u}$ to \eqref{eqn9} on $\Omega$, equivalently, there exists a smooth complete strictly locally convex vertical graph solving \eqref{eqn1}--\eqref{eqn2}.
\end{thm}

This paper is organized as follows: in section 2, we provide some basic formulae, properties and calculations for vertical graphs. The $C^2$ estimates for strictly locally convex solutions of \eqref{eqn10} are presented in section 3 and 4. In section 5, we prove Theorem \ref{Theorem1} via continuity method and degree theory. Section 6 provides the interior $C^2$ estimates for convex solutions to prescribed scalar curvature equations in $\mathbb{H}^{n+1}$, which finishes the proof of Theorem \ref{Theorem2}.

\medskip
\noindent
{\bf Acknowledgements} \quad
The author would like to thank Dr. Zhizhang Wang and Dr. Wei Sun for many useful and enlightening discussions. The author also wish to express the deep thanks to the reviewer, who pointed out a mistake in the previous version and gave many helpful suggestions, which help the author have a better understanding of the problem.

\vspace{5mm}

\section{ \large Vertical graphs }

\vspace{4mm}

Suppose $\Sigma$ is locally represented as the graph of a positive $C^2$ function over a domain $\Omega \subset \mathbb{R}^n$:
\[ \Sigma =\, \{ ( x, \,u(x) ) \in \mathbb{R}^{n+1}_+  \,\big\vert \, x \in \Omega \}. \]
Since the coordinate vector fields on $\Sigma$ are
\[  \partial_i + u_i \,\partial_{n + 1}, \quad\quad i = 1, \ldots, n  \quad \mbox{where} \quad \partial_i = \frac{\partial}{\partial x_i}, \]
thus the upward Euclidean unit normal vector field to $\Sigma$, the Euclidean metric, its inverse and the Euclidean second fundamental form of $\Sigma$ are given respectively by
\[\nu = \Big( \frac{- D u}{w},\, \frac{1}{w} \Big), \quad\quad w = \sqrt{ 1 + |D u |^2},  \]

\[ \tilde{g}_{ij} = \delta_{ij} + u_i u_j, \quad\quad  \tilde{g}^{ij} =  \delta_{ij} - \frac{u_i u_j}{w^2},  \quad\quad  \tilde{h}_{ij} = \frac{u_{ij}}{w}. \]
Consequently, the Euclidean principal curvatures $\tilde{\kappa} [ \Sigma ]$ are the eigenvalues of the symmetric matrix:

\[ \tilde{a}_{ij} := \frac{1}{w} \gamma^{ik} u_{kl} \gamma^{lj}, \]
where
\begin{equation*}
 \gamma^{ik} = \delta_{ik} - \frac{u_i u_k}{w ( 1 + w )}
\end{equation*}
and its inverse
\[ \gamma_{ik} = \delta_{ik} + \frac{u_i u_k}{1 + w}, \quad\quad  \gamma_{ik} \gamma_{kj} = \tilde{g}_{ij}. \]

For geometric quantities in hyperbolic space,  we first note that
the upward hyperbolic unit normal vector field to $\Sigma$ is
\[ {\bf n} = u \,\nu = \, u \,\Big( \frac{- D u}{w}, \,\,\frac{1}{w} \Big) \]
and the hyperbolic metric of $\Sigma$ is
\begin{equation}  \label{eq0-7}
g_{ij} = \frac{1}{u^2}\, ( \delta_{ij} + u_i u_j ).
\end{equation}
To compute the hyperbolic second fundamental form $h_{ij}$ of $\Sigma$,
applying the Christoffel symbols in $\mathbb{H}^{n + 1}$,
\begin{equation} \label{eq2-2}
{\bf\Gamma}_{ij}^k = \,\frac{1}{x_{n + 1}} \big(- \delta_{ik} \delta_{n + 1\, j} - \delta_{kj} \delta_{n + 1 \,i} + \delta_{k\, n + 1} \delta_{ij} \big),
\end{equation}
we obtain
\begin{equation*}
{\bf D}_{\partial_i + u_i \partial_{n + 1}} \big( \partial_j + u_j \,\partial_{n + 1} \big) = \,  - \frac{u_j}{x_{n + 1}}\, \partial_i - \frac{u_i}{x_{n + 1}} \,\partial_j + \Big( \frac{\delta_{ij}}{x_{n + 1}} + u_{ij} - \frac{u_i u_j}{x_{n + 1}} \Big)\, \partial_{n + 1},
\end{equation*}
where ${\bf D}$ denotes the Levi-Civita connection in $\mathbb{H}^{n+1}$.  Therefore,
\[ h_{ij} = \frac{1}{u^2 w} ( \delta_{ij} + u_i u_j + u u_{ij} ). \]
The hyperbolic principal curvatures $\kappa [ \Sigma ]$ are the eigenvalues of the symmetric matrix $A [u] = \{ a_{ij} \}$:
\begin{equation*}
 a_{ij} = \,  u^2 \gamma^{ik} h_{kl} \gamma^{lj}
       = \,\frac{1}{w} \,\gamma^{ik} ( \delta_{kl} + u_k u_l + u u_{kl} ) \,\gamma^{lj} = \,  \frac{1}{w} ( \delta_{ij} + u \gamma^{ik} u_{kl} \gamma^{lj} ).
\end{equation*}

\begin{rem} \label{rmk1}
The graph of $u$ is strictly locally convex if and only if  the symmetric matrix $\{ a_{ij }\}$, $\{ h_{ij} \}$ or $\{ \delta_{ij} + u_i u_j + u u_{ij} \}$ is positive definite.
\end{rem}

\vspace{2mm}

\begin{rem}
From the above discussion, we can see that
\begin{equation} \label{eq0-3}
h_{ij} = \frac{1}{u}\,\tilde{h}_{ij} + \frac{\nu^{n+1}}{u^2}\, \tilde{g}_{ij},
\end{equation}
where $\nu^{n+1} = \nu \cdot \partial_{n + 1}$ and $\cdot$ is the inner product in $\mathbb{R}^{n+1}$.
This formula indeed holds for any local frame on any hypersurface $\Sigma$
(which may not be a graph). The relation between $\kappa[\Sigma]$ and $\tilde{\kappa} [\Sigma]$ is
\begin{equation} \label{eqn14}
\kappa_i = \, u \,\tilde{\kappa_i} + \nu^{n+1}, \quad \quad i = 1, \ldots, n.
\end{equation}
\end{rem}

\vspace{2mm}

We observe the following phenomenon for strictly locally convex hypersurfaces in $\mathbb{H}^{n+1}$ (see also Lemma 3.3 in \cite{GSS09} for a similar assertion).

\vspace{2mm}

\begin{lemma} \label{LemmaV}
Let $\Sigma$ be a connected, orientable, strictly locally convex hypersurface in $\mathbb{H}^{n+1}$ with a specially chosen orientation. Then $\Sigma$ must be a vertical graph.
\end{lemma}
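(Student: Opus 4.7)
The plan is to reduce the lemma to showing that, with the orientation chosen so that all hyperbolic principal curvatures $\kappa_i$ are positive, the upward Euclidean normal component $\nu^{n+1}$ is strictly positive on $\Sigma$. The formula derived in Section 2,
\[ \kappa_i \;=\; u\,\tilde{\kappa}_i + \nu^{n+1}, \qquad i=1,\dots,n, \]
which (via \eqref{eq0-3}) holds in any local frame and not only in a graph frame, expresses the hyperbolic shape operator as $u\,S_{\mathrm{euc}} + \nu^{n+1} I$; so the sign of $\nu^{n+1}$ is intrinsic to the geometry. Once $\nu^{n+1}>0$ everywhere, the vertical projection $\pi:\Sigma \to \mathbb{R}^n$ is a local diffeomorphism, and a short convexity argument upgrades this to a global graph structure.

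The core step is to rule out a point $p \in \Sigma$ with $\nu^{n+1}(p)=0$. At such a $p$, the Euclidean tangent plane $T_p\Sigma$ is vertical, and since vertical Euclidean hyperplanes in the half-space model are totally geodesic in $\mathbb{H}^{n+1}$, $T_p\Sigma$ coincides with the totally geodesic hyperbolic tangent plane $P$ at $p$. Strict hyperbolic local convexity places $\Sigma$ on one side of $P$ in a neighborhood of $p$. I would then promote this to global one-sidedness, using the compact asymptotic boundary $\Gamma$ either via a moving-plane / touching-point argument in which $P$ is slid parallel to itself until contact fails, or via a Hadamard-type rigidity statement for strictly locally convex hypersurfaces in $\mathbb{H}^{n+1}$. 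Global one-sidedness forces $\Sigma$, and hence $\Gamma$, to lie in a closed half-space whose boundary is the Euclidean affine hyperplane $P \cap \partial_{\infty}\mathbb{H}^{n+1}$ of $\mathbb{R}^n$. But this hyperplane passes through $\pi(p)$, an interior point of the bounded domain enclosed by $\Gamma$, which is impossible.

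For the injectivity of $\pi$, once $\nu^{n+1}>0$ has been established, strict hyperbolic convexity presents $\Sigma$ locally as the upper boundary of a convex region with outward hyperbolic normal pointing upward. Two distinct points $p_1 \neq p_2$ with $\pi(p_1)=\pi(p_2)$ would give a vertical geodesic meeting $\Sigma$ at both; since $\nu^{n+1}>0$ at each endpoint, the outward direction is upward at both, which is inconsistent with the vertical geodesic having to exit the convex side at one endpoint and re-enter at the other.

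The main obstacle I expect is precisely the promotion of local to global one-sidedness with respect to the tangent totally geodesic hyperplane $P$ in the first step. Strict local convexity alone does not furnish a support property far from the tangency point, so the argument must genuinely exploit the compact asymptotic boundary and the totally geodesic nature of vertical Euclidean hyperplanes, either through a continuation argument along $\Sigma$ or via an adapted Hadamard-type theorem. Everything else in the proof is essentially bookkeeping once this rigidity is in hand.
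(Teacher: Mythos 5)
Your overall plan (rule out a zero of $\nu^{n+1}$ via the relation $\kappa_i = u\,\tilde{\kappa}_i + \nu^{n+1}$, then argue injectivity of the vertical projection) is in the right spirit, and you correctly isolate the difficulty; but the difficulty you flag is a genuine gap, not merely a technical detail to fill in later. Strict local convexity gives $\Sigma$ a support plane only in a neighborhood of the putative zero $p$ of $\nu^{n+1}$. The assertion that $\Sigma$, and hence $\Gamma$, lies entirely on one side of the vertical totally geodesic hyperplane $P$ through $p$ is a Hadamard/van Heijenoort--type global rigidity statement whose usual hypotheses (completeness, control near the boundary, embeddedness) are precisely what this lemma is being used to establish, so invoking such a theorem here is close to circular, and a moving-plane argument would need Lemma \ref{LemmaA}-style comparison machinery that in turn already presumes a graph structure. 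The same issue recurs in your injectivity step: speaking of ``the convex side'' of $\Sigma$ along a vertical geodesic presupposes a globally well-defined side, which a merely locally convex hypersurface need not have.

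The paper's proof avoids any global support-plane statement. It argues by contradiction directly from the failure of the graph property: if $\Sigma$ is not a graph there is a vertical line meeting $\Sigma$ at two points $p_1,p_2$; connectedness and orientability produce a point $p_3$ on a curve in $\Sigma$ joining them at which the tangent hyperplane is vertical, i.e.\ $\nu^{n+1}(p_3)=0$. By \eqref{eqn14}, at $p_3$ all Euclidean principal curvatures are strictly positive. The contradiction is then purely local and two-dimensional: slice $\Sigma$ by the $2$-plane through $p_1,p_2,p_3$ and compare the sign of the Euclidean curvature of the slice curve at $p_3$ with the fact that the curve must return to the vertical line through $p_1,p_2$. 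Nothing about the global position of $\Sigma$ relative to $P$ is used. To salvage your route you would need to actually prove the local-to-global one-sidedness (for instance by showing the set where $\Sigma$ lies strictly on one side of $P$, together with the tangency locus, is open and closed in $\Sigma$, using the strict convexity to rule out boundary tangencies), and supply a parallel argument for injectivity of the projection; as written those two steps are asserted rather than established.
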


\begin{proof}
Suppose $\Sigma$ is not a vertical graph. Then there exists a vertical line (of dimension $1$) intersecting $\Sigma$ at two distinct points $p_1$ and $p_2$. Since $\Sigma$ is orientable, we may assume that $\nu^{n+1} (p_1) \cdot \nu^{n+1} (p_2) \leq 0$. Since $\Sigma$ is connected, there exists a $1$-dimensional curve $\gamma$ on $\Sigma$ connecting $p_1$ and $p_2$. Among the tangent hyperplanes (of dimension $n$) to $\Sigma$ along $\gamma$, choose a vertical one which is tangent to $\Sigma$ at a point $p_3$.  At $p_3$, $\nu^{n+1} = 0$ and $u > 0$. By \eqref{eqn14}, $\tilde{\kappa}_i > 0$ for all $i$ at $p_3$. On the other hand, let $P$ be a $2$-dimensional plane passing through $p_1$, $p_2$ and $p_3$. If $P \cap \Sigma$ is $1$-dimensional and has nonpositive (Euclidean) curvature at $p_3$ with respect to $\nu$, we reach a contradiction; otherwise we take a different orientation of $\Sigma$, then $\Sigma$ is either not strictly locally convex or we reach a contradiction. If $P \cap \Sigma$ is $2$-dimensional, then any line on $P \cap \Sigma$ through $p_3$ leads to a contradiction.
\end{proof}

Equation \eqref{eqn1} can be written as
\begin{equation} \label{eq1-1}
  f( \kappa [\, u \,] ) = f( \lambda( A[ \, u \,] )) = F( A[ \,u\, ] ) = \, \psi^{1/k} ( x,\, u ).
\end{equation}
Recall that the curvature function $f$ satisfies the fundamental structure conditions
\begin{equation} \label{eqn3}
f_i (\lambda) \equiv \frac{\partial f(\lambda)}{\partial \lambda_i} > 0 \quad \mbox{in} \,\,  \Gamma_k,\quad i = 1, \ldots, n,
\end{equation}
\begin{equation} \label{eqn4}
f \,\,\mbox{is} \,\,\mbox{concave}\,\, \mbox{in} \,\, \Gamma_k,
\end{equation}
\begin{equation} \label{eqn5}
f > 0 \quad \mbox{in} \,\, \Gamma_k, \quad\quad f = 0 \quad \mbox{on} \,\, \partial\Gamma_k.
\end{equation}

\vspace{5mm}

\section{\large Second Order Boundary Estimates}

\vspace{5mm}

In this section and the next section, we derive a priori $C^2$ estimates for strictly locally convex solution $u$ to the Dirichlet problem \eqref{eqn10} with $u \geq \underline{u}$ in $\Omega_{\epsilon}$. By Evans-Krylov theory \cite{Evans,Krylov}, classical continuity method and degree theory (see \cite{Li89}) we prove the existence of a strictly locally convex solution to \eqref{eqn10}. Higher-order regularity then follows from classical Schauder theory.

Let $u \geq \underline{u}$ be a strictly locally convex function over $\Omega_{\epsilon}$ with $u = \underline{u}$ on $\Gamma_{\epsilon}$. We have the following $C^0$ estimate:
\begin{equation} \label{eqn15}
 \underline{u}\,\, \leq u \leq \sqrt{\epsilon^2 + (\mbox{diam} \Omega)^2} \quad \mbox{in} \quad \overline{\Omega_{\epsilon}}.
\end{equation}
In fact, by Remark \ref{rmk1}, for any $x_0 \in \Omega_{\epsilon}$,  the function $u^2 + |x - x_0|^2$ is Euclidean strictly locally convex in $\Omega_{\epsilon}$, over which, we have
\[  u^2 \leq u^2 + |x - x_0|^2 \leq \max\limits_{\Gamma_{\epsilon}} ( u^2 + |x - x_0|^2) \leq \epsilon^2 + (\mbox{diam}\Omega)^2.\]
Therefore we obtain \eqref{eqn15}.

For the gradient estimate, we perform a transformation $u = \sqrt{v}$. Denote
\[W = \sqrt{4 v + |D v|^2}. \]
The geometric quantities in section 2 can be expressed in terms of $v$,
\[ \begin{aligned} \gamma^{ik} = \delta_{ik} - \frac{v_i v_k}{W (2 \sqrt{v} + W)}, \quad\quad &\gamma_{ik} = \delta_{ik} + \frac{v_i v_k}{2 \sqrt{v} ( 2 \sqrt{v} + W )}, \\
  h_{ij} = \frac{2}{\sqrt{v}\, W}\,\,\big( \delta_{ij} + \frac{1}{2}\, v_{ij} \big), \quad\quad
& a_{ij} = \frac{2 \sqrt{v}}{W} \gamma^{ik} \big(\delta_{kl} + \frac{1}{2}\, v_{kl} \big) \gamma^{lj}. \end{aligned}\]

Since the graph is strictly locally convex, $v$ satisfies
\begin{equation*}
\left\{ \begin{aligned}
 \Delta v + 2 n & > 0  \quad & \mbox{in} \quad \Omega_{\epsilon},\\
 v & = \epsilon^2 \quad & \mbox{on} \quad \Gamma_{\epsilon},
\end{aligned} \right.
\end{equation*}
where $\Delta$ is the Laplace-Beltrami operator in $\mathbb{R}^n$.  Let $\overline{v}$ be the solution of
\begin{equation*}
\left\{ \begin{aligned}
 \Delta \overline{v} + 2 n & = 0  \quad & \mbox{in} \quad \Omega_{\epsilon},\\
 \overline{v} & = \epsilon^2 \quad & \mbox{on} \quad \Gamma_{\epsilon}.
\end{aligned} \right.
\end{equation*}
By the comparison principle,
\[ \underline{u}^2 = \underline{v} \leq v \leq \overline{v} \quad \mbox{in} \quad \Omega_{\epsilon}. \]
Consequently,
\begin{equation} \label{eq3-8}
|D v| \leq C \quad \mbox{on} \quad \Gamma_{\epsilon},
\end{equation}
where $C$ is a positive constant depending on $\epsilon$. Hereinafter in this section, $C$ always denotes such a constant which may change from line to line. Equivalently,
\begin{equation} \label{eq3-22}
|D u| \leq C \quad \mbox{on} \quad \Gamma_{\epsilon}.
\end{equation}

For global gradient estimate, consider the test function
\[ W = \sqrt{4 v + |D v|^2}. \]
Assume its maximum is achieved at an interior point $x_0 \in \Omega_{\epsilon}$. Then at $x_0$,
\[ W W_i =  \big( v_{ki} + 2 \delta_{ki} \big) v_k = 0, \quad\quad  i = 1, \ldots, n. \]
Since the matrix $\big( v_{ki} + 2 \delta_{ki} \big)$ is positive definite,
thus $v_k = 0$ for all $k$ at $x_0$.
Along with \eqref{eqn15} and \eqref{eq3-8}, we obtain
\begin{equation} \label{eq2-1}
\max\limits_{\overline{\Omega_{\epsilon}}} |D v| \leq  \max\limits_{\overline{\Omega_{\epsilon}}} \sqrt{4 v + |D v|^2} \leq \max \Big\{ \max\limits_{\Gamma_{\epsilon}} \sqrt{4 \epsilon^2 + |D v|^2},  2 \max\limits_{\overline{\Omega_{\epsilon}}} \sqrt{v} \Big\} \leq C.
\end{equation}
Equivalently,
\begin{equation} \label{eq2-3}
\max\limits_{\overline{\Omega_{\epsilon}}} |D u|  \leq C.
\end{equation}

For second order boundary estimate, we change equation \eqref{eq1-1} under the transformation $u = \sqrt{v}$ into
\begin{equation} \label{eqn16}
G( D^2 v, \,D v, v ) = \,F ( a_{ij} ) = \,f( \lambda ( a_{ij} ) ) = \,\psi( x,\, v ).
\end{equation}

By direct calculation, we obtain the following formulae.

\begin{lemma} \label{Lemma1}
\begin{equation*}
\begin{aligned}
G^{st} = & \frac{\partial G}{\partial v_{st}} =  \frac{ \sqrt{v}}{W} F^{ij} \gamma^{is} \gamma^{t j},  \\
G_v = & \frac{\partial G}{\partial v} =   \Big(\frac{1}{2 v} - \frac{2}{W^2}\Big) F^{ij} a_{ij} + \frac{v_i v_q}{ W^2  v} F^{ij} a_{qj}, \\
G^s = & \frac{\partial G}{\partial v_s} = - \frac{v_s}{W^2} F^{ij} a_{ij} - \frac{W \gamma^{is} v_q + 2 \sqrt{v} \gamma^{qs} v_i}{\sqrt{v} W ( 2 \sqrt{v} + W )} F^{ij} a_{qj}.
\end{aligned}
\end{equation*}
In addition,
\[ \vert G^s \vert \,\leq \, C  \quad \mbox{and} \quad \vert G_v \vert \,\leq \, C. \]
\end{lemma}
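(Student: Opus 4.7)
The plan is as follows. Since $G(D^2 v, Dv, v) = F(a_{ij})$ with $a_{ij}$ as written in the excerpt, all three derivatives come from the chain rule $\partial G/\partial(\cdot) = F^{ij}\,\partial a_{ij}/\partial(\cdot)$, so the task splits naturally into the computation of the three matrix derivatives and a subsequent estimation step.

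First I would handle $G^{st}$: the entries $v_{st}$ enter $a_{ij}$ only through the central factor $\delta_{kl}+\tfrac12 v_{kl}$, so a direct calculation, after symmetrizing in $s,t$, gives $\partial a_{ij}/\partial v_{st} = (\sqrt v/W)\,\gamma^{is}\gamma^{tj}$, which yields the first formula. Next, for $G^s$ and $G_v$ I would differentiate the remaining $v$-dependent pieces of $a_{ij}$---the prefactor $2\sqrt v/W$ and the two $\gamma$-matrices---using
\[
\frac{\partial W}{\partial v} = \frac{2}{W}, \qquad \frac{\partial W}{\partial v_s} = \frac{v_s}{W},
\]
together with the key algebraic identity
\[
\gamma^{ik} v_k = \frac{2\sqrt v}{W}\,v_i,
\]
which follows in one line from $W^2-|Dv|^2=4v$. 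Every contraction involving an extra factor $v_i$ or $v_q$ then collapses via this identity onto a multiple of $a_{qj}$ or $a_{ij}$; collecting terms and invoking Euler's identity $F^{ij}a_{ij}=\sum_i f_i(\kappa)\kappa_i=f(\kappa)=\psi^{1/k}$ (valid because $f=\sigma_k^{1/k}$ is homogeneous of degree one) puts the result in the compact form stated.

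For the $L^\infty$ bounds I would combine Lemma \ref{Lemma0-2} ($\epsilon^2\le v\le C$, hence $W\ge 2\sqrt v\ge 2\epsilon$ and $W(2\sqrt v+W)\ge 4\epsilon^2$) with Lemmas \ref{Lemma3} and \ref{Lemma1-2} ($|Dv|=2\sqrt v\,|Du|\le C$). The scalar $F^{ij}a_{ij}=\psi^{1/k}$ is already bounded, and since the positive symmetric matrices $(F^{ij})$ and $(a_{qj})$ share an eigenbasis (the former is a function of the latter), in that frame
\[
v_i v_q F^{ij}a_{qj} = \sum_j \tilde v_j^{\,2} f_j \kappa_j \le |Dv|^2 F^{ij}a_{ij} \le C|Dv|^2 .
\]
Dividing by the uniformly positive $v$, $W^2$, or $W(2\sqrt v+W)$ then yields $|G^s|,|G_v|\le C(\epsilon)$.

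The only real obstacle is the algebraic bookkeeping in the middle step: several contributions coming from $\partial\gamma^{ik}/\partial v$ and $\partial\gamma^{ik}/\partial v_s$ look unwieldy at first, and the work lies in recognizing that the identity $\gamma^{ik}v_k=(2\sqrt v/W)v_i$ collapses them into the compact form displayed. Everything else is a routine application of the chain rule together with the $C^0$--$C^1$ bounds already in hand.
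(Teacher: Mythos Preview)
Your approach is essentially the same as the paper's: both compute the three derivatives by the chain rule, using $\partial W/\partial v=2/W$, $\partial W/\partial v_s=v_s/W$, differentiation of $\gamma^{ik}$ via the inverse-matrix rule, and the contraction identity $\gamma^{ik}v_k=(2\sqrt v/W)\,v_i$. The paper's proof in fact stops after deriving the three formulas and does not spell out the $L^\infty$ bounds; your argument for those (homogeneity giving $F^{ij}a_{ij}=\psi^{1/k}$, simultaneous diagonalization of $(F^{ij})$ and $(a_{ij})$, and the $C^0$--$C^1$ estimates from Lemmas~\ref{Lemma0-2}, \ref{Lemma3}, \ref{Lemma1-2}) is correct and fills that gap.
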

\begin{proof}
Since
\[  G( D^2 v, D v, v ) =  F \Big( \frac{2 \sqrt{v}}{W} \gamma^{ik} \big(\delta_{kl} + \frac{1}{2}\, v_{kl} \big) \gamma^{lj} \Big),   \]
we have,
\begin{equation*}
 G^{st}  = \,\frac{\partial F}{\partial a_{ij}} \frac{\partial a_{ij}}{\partial v_{st}} = \frac{ \sqrt{v}}{W} F^{ij} \gamma^{is} \gamma^{t j}.
\end{equation*}

To compute $G_v$, note that
\begin{equation*}
\frac{\partial W}{\partial v} = \frac{2}{W}  \quad \mbox{and} \quad  \frac{\partial\gamma_{ik}}{\partial v} = - \frac{v_i v_k}{4 v^{3/2} W}.
\end{equation*}
Consequently,
\[ \frac{\partial\gamma^{ik}}{\partial v} = \gamma^{ip} \,\frac{v_p v_q}{4 v^{3/2} W} \,\gamma^{qk}. \]
Hence,
\[ \begin{aligned} G_v = & F^{ij} \Big( \frac{\partial}{\partial v} \big(\frac{2 \sqrt{v}}{W} \big) \gamma^{ik} (\delta_{kl} + \frac{1}{2} v_{kl} ) \gamma^{lj} + \frac{4 \sqrt{v}}{W} \frac{\partial \gamma^{ik}}{\partial v} (\delta_{kl} + \frac{1}{2} v_{kl} ) \gamma^{lj}  \Big) \\
= & \Big(\frac{1}{2 v} - \frac{2}{W^2}\Big) F^{ij} a_{ij} + \frac{\gamma^{ip} v_p v_q}{2 v^{3/2} W} F^{ij} a_{qj}.
\end{aligned} \]
We then obtain $G_v$ in view of
\[ \gamma^{ip} v_p = \,\frac{2 \sqrt{v} \,v_i}{W}. \]

For $G^s$, note that
\[ \frac{\partial W}{\partial v_s} =  \frac{v_s}{W}, \quad\quad \frac{ \partial \gamma^{ik}}{\partial v_s} = - \gamma^{ip}\, \frac{\partial \gamma_{pq}}{\partial v_s} \, \gamma^{qk}, \quad \mbox{and} \]
\begin{equation*}
\frac{\partial \gamma_{p q}}{\partial v_s} =
  \frac{\delta_{ps} v_q + \delta_{q s} v_p }{2 \sqrt{v} ( 2 \sqrt{v} + W)} - \frac{v_p v_q v_s}{2 \sqrt{v} (2 \sqrt{v} + W)^2 W}
=  \frac{\delta_{p s} v_q + v_p \gamma^{q s}}{2 \sqrt{v} ( 2 \sqrt{v} + W)}.
\end{equation*}
It follows that
\begin{equation*}
\begin{aligned}
   G^s  = & F^{ij} \Big( - \frac{ 2 \sqrt{v} v_s}{W^3} \gamma^{ik} (\delta_{kl} + \frac{1}{2} v_{kl} ) \gamma^{lj} + \frac{4 \sqrt{v}}{W} \frac{\partial \gamma^{ik}}{\partial v_s} (\delta_{kl} + \frac{1}{2} v_{kl} ) \gamma^{lj}  \Big) \\ = & - \frac{v_s}{W^2} F^{ij} a_{ij} - \frac{W \gamma^{is} v_q + 2 \sqrt{v} \gamma^{qs} v_i}{\sqrt{v} W ( 2 \sqrt{v} + W )} F^{ij} a_{qj}.
\end{aligned}
\end{equation*}
\end{proof}

For an arbitrary point on $\Gamma_{\epsilon}$, we may assume it to be the origin of $\mathbb{R}^n$. Choose a coordinate system so that the positive $x_n$ axis points to the interior normal of $\Gamma_{\epsilon}$ at the origin. There exists a uniform constant $r > 0$ such that $\Gamma_{\epsilon} \cap B_r (0)$ can be represented as a graph
\[ x_n = \rho ( x' ) = \frac{1}{2} \sum\limits_{\alpha, \beta < n} B_{\alpha \beta} x_{\alpha} x_{\beta} + O ( |x'|^3 ), \quad x' = (x_1, \ldots, x_{n - 1}).  \]

Since
\[ v = \epsilon^2 \quad \quad\mbox{on} \quad \Gamma_{\epsilon}, \]
or equivalently
\[ v ( x', \rho( x' )) = \epsilon^2, \]
we have
\begin{equation} \label{eq3-3}
 v_\alpha + v_n \,\rho_\alpha = 0
\end{equation}
and
\[ v_{\alpha \beta} +  v_{\alpha n} \rho_\beta + (v_{n \beta} + v_{n n} \rho_\beta ) \rho_\alpha + v_n \rho_{\alpha \beta} = 0. \]
Therefore,
\[ v_{\alpha \beta} (0) = - v_n (0)\, \rho_{\alpha \beta} (0), \quad\quad \alpha, \beta < n. \]
Consequently,
\begin{equation}  \label{eq2B-3}
| v_{\alpha \beta} (0) | \leq C, \quad \quad\quad  \alpha, \beta < n,
\end{equation}
where $C$ is a constant depending on $\epsilon$.

For the mixed tangential-normal derivative $v_{\alpha n} (0)$ with $\alpha < n$, note that the graph of $\underline{u}$ is strictly locally convex on $\overline{\Omega_{\epsilon}}$. Hence we have
\[ I + \frac{1}{2} \, D^2 \underline{v} \,\,\geq \, 3 \, c_0\, I \]
for some positive constant $c_0$.
Let $d(x)$ be the distance from $x \in \overline{\Omega_{\epsilon}}$ to $\Gamma_{\epsilon}$ in $\mathbb{R}^n$.
Consider the barrier function
\[ \Psi = A \, V + B\, |x|^2  \]
with
\[ V = \, v - \underline{v} + \tau  d - N d^2, \]
where the positive constant $N$, $\tau$, $B$ and $A$ are to be determined.

Define the linear operator  \,\,$ L  = \, G^{s t} \,D_{s t} +  G^s \,D_s$. By the concavity of $G$
with respect to $D^2 v$,
\begin{equation*}
\begin{aligned}
 L V   = \, & G^{st} D_{st} ( v - \underline{v} - N \,d^2 ) + \tau \,G^{st} D_{st}  d + G^s  D_s ( v - \underline{v} + \tau\, d - N \,d^2 ) \\
     \leq \, & G( D^2 v, D v, v ) - G\Big( D^2\big( \underline{v} + N \,d^2 \big) - 2 c_0 I, D v, v \Big)
       \\ & + ( C \tau - 2 c_0 ) \sum G^{ii} + C  ( 1 +  \tau +  N \delta ).
\end{aligned}
\end{equation*}
Note that
\begin{equation*}
    I + \frac{1}{2} \,D^2\big( \underline{v} + N \,d^2 \big ) -  c_0 I
\geq \,\,  2 c_0 I + N D d \otimes D d - C N \delta I := \mathcal{H}.
\end{equation*}
Denote $\gamma = ( \gamma^{ik} )$. We have
\begin{equation*}
\begin{aligned}
& G\Big( D^2\big( \underline{v} + N \,d^2 \big) - 2 c_0 I, D v, v \Big)  = \,  F \Big( \frac{2 \sqrt{v}}{W} \gamma \big( I + \frac{1}{2} \,D^2\big( \underline{v} + N \,d^2 \big ) -  c_0 I  \big) \gamma  \Big) \\
\geq  &\,\,F \Big( \frac{2 \sqrt{v}}{W} \gamma \,\mathcal{H}\, \gamma \Big)
=  \,F \Big( \frac{2 \sqrt{v}}{W} \,\mathcal{H}^{1/2} \,\gamma\gamma \,\mathcal{H}^{1/2} \Big)
\geq  \, F ( \tilde{c}\, \mathcal{H} ),
\end{aligned}
\end{equation*}
where $\tilde{c}$ is a positive constant.
Hence
\begin{equation*}
L V \leq \,  - F ( \tilde{c}\, \mathcal{H} )
       + ( C \tau - 2 c_0 ) \sum G^{ii} + C  ( 1 +  \tau +  N \delta ).
\end{equation*}
Note that $\mathcal{H} = \mbox{diag} \Big( 2 c_0 - C N \delta,\,\, \ldots,\,\, 2 c_0 - C N \delta,\,\, 2 c_0 - C N \delta + N\Big)$.
We can choose $N$ sufficiently large and $\tau$, $\delta$ sufficiently small ($\delta$ depends on $N$) such that
\[  C \tau  \leq c_0, \quad  C N \delta \leq c_0, \quad  -F ( \tilde{c}\,\mathcal{H} )   + C  + 2 c_0  \leq - 1. \]
Hence the above inequality becomes
\begin{equation} \label{eq3-1}
 L V \leq - c_0 \sum G^{ii} - 1.
\end{equation}
We then require $\delta \leq \frac{\tau}{N}$ so that
\[ V \,\geq \, 0 \quad \mbox{in} \quad \Omega_{\epsilon} \cap B_{\delta} ( 0 ). \]
By Lemma \ref{Lemma1},
\begin{equation*}
L \big(  |x|^2 \big)
\leq  \,C \big(1 + \sum G^{ii}\big).
\end{equation*}
This, together with \eqref{eq3-1} yields,
\begin{equation} \label{eq3-2}
L \Psi \,\leq \,A \big( - c_0 \sum G^{ii} - 1 \big) + B C \big( 1 + \sum G^{ii} \big) \quad\mbox{in}\quad \Omega_{\epsilon} \cap B_{\delta} ( 0 ).
\end{equation}

Now, we consider the operator
\[ T = \partial_\alpha + \sum\limits_{\beta < n} B_{\alpha \beta} ( x_{\beta} \partial_n - x_n \partial_{\beta} ).  \]
Note that for $\delta > 0$ sufficiently small,
\[ \vert T v   \vert \leq \,C   \,\, \quad \quad \mbox{in} \quad \Omega_{\epsilon} \cap B_{\delta} ( 0 ).  \]
Also, in view of \eqref{eq3-3},
\[ \vert T v  \vert \leq C \,|x|^2  \,\, \quad \quad \mbox{on} \quad \Gamma_{\epsilon} \cap B_{\delta} ( 0 ).  \]

To compute $L( T v )$, we need the following lemma (see \cite{GSS09}).

\begin{lemma} \label{Lemma2}
For $1 \leq i, j \leq n$,
\[ ( L + G_v - \psi_v )( x_i v_j - x_j v_i ) = x_i \psi_{x_j} - x_j \psi_{x_i}. \]
\end{lemma}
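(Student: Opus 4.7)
The plan is to exploit the rotational invariance of the operator $G$ in the horizontal variables. The key observation is that $G(D^2 v, Dv, v)$ depends on $v$ only through the hyperbolic principal curvatures of the graph, i.e.\ through $\lambda(A[u])$, which is invariant under Euclidean rotations of $\mathbb{R}^n$ acting on $x$ (with $x_{n+1}$ fixed). More concretely, for any $R \in O(n)$, if we set $v^R(x) := v(Rx)$, then $Dv^R(x) = R^T (Dv)(Rx)$ and $D^2 v^R(x) = R^T (D^2v)(Rx)\, R$, so a direct check using the formulas for $\gamma^{ik}$ and $a_{ij}$ in Section 2 shows that the matrix $(a_{ij})$ transforms by conjugation, $a^R(x) = R^T\, a(Rx)\, R$; eigenvalues are preserved, hence $G(D^2 v^R, Dv^R, v^R)(x) = G(D^2v, Dv, v)(Rx)$.

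Now fix $1 \leq i, j \leq n$ and let $R_\theta$ denote the rotation by angle $\theta$ in the $(x_i, x_j)$-coordinate plane (the identity on the other coordinates), and set $v_\theta(x) := v(R_\theta x)$. By the invariance and the equation \eqref{eqn16},
\[
G(D^2 v_\theta, Dv_\theta, v_\theta)(x) \;=\; G(D^2v, Dv, v)(R_\theta x) \;=\; \psi\bigl(R_\theta x,\, v_\theta(x)\bigr).
\]
This identity holds for $\theta$ in a neighborhood of $0$ (at any fixed interior $x$), so one can differentiate in $\theta$ at $\theta = 0$. A direct computation gives
\[
\partial_\theta v_\theta(x)\big|_{\theta=0} \;=\; x_i v_j(x) - x_j v_i(x),
\]
and $\partial_\theta (R_\theta x)_i|_0 = -x_j$, $\partial_\theta (R_\theta x)_j|_0 = x_i$.

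Applying the chain rule to both sides: the left-hand side becomes
\[
G^{st} D_{st}(x_i v_j - x_j v_i) + G^s D_s(x_i v_j - x_j v_i) + G_v (x_i v_j - x_j v_i) \;=\; L(x_i v_j - x_j v_i) + G_v(x_i v_j - x_j v_i),
\]
while the right-hand side becomes
\[
x_i \psi_{x_j} - x_j \psi_{x_i} + \psi_v (x_i v_j - x_j v_i).
\]
Moving the $\psi_v$ term to the left gives exactly the claimed identity.

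The only nontrivial input is the rotational invariance of $G$; everything else is bookkeeping. I expect this to be the conceptually main step, though it is transparent both from the geometric meaning of $G$ (it expresses a symmetric function of the hyperbolic principal curvatures of the graph) and from a brief direct calculation with the formulas of Section 2. As a sanity check one may verify the identity directly by differentiating \eqref{eqn16} in $x_j$, multiplying by $x_i$, and antisymmetrizing in $(i,j)$; the cross terms involving $G^{is}v_{js} - G^{js}v_{is}$ and $G^i v_j - G^j v_i$ must cancel, and this cancellation is precisely the infinitesimal content of the rotational invariance.
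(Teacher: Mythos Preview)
Your proof is correct and follows essentially the same approach as the paper: both introduce a one-parameter family of rotations in the $(x_i,x_j)$-plane, use that the equation $G=\psi$ is preserved along this family, and differentiate at $\theta=0$. The paper compresses the argument into the phrase ``differentiate with respect to $\theta$ and change the order of differentiation,'' whereas you make the underlying rotational invariance of $G$ explicit; this is exactly what justifies the interchange, so your write-up is in fact a cleaner rendering of the same idea.
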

\begin{proof}
For $\theta \in \mathbb{R}$, let
\begin{equation*}
\begin{aligned}
 y_i = \, & x_i \cos\theta - x_j \sin\theta, \\
 y_j = \, & x_i \sin\theta + x_j \cos\theta, \\
 y_k = \, & x_k, \quad k \neq i, j.
\end{aligned}
\end{equation*}
Since $G - \psi$ is invariant for the rotations of $\mathbb{R}^n$, we have
\[ G( D^2 v( y ), D v( y ), v( y ) ) = \psi( y, v( y ) ). \]
Differentiate with respect to $\theta$ and change the order of differentiation,
\[ ( L  +  G_v - \psi_v) \vert_y \,\frac{\partial v}{\partial \theta} = \psi_{y_i} \frac{\partial y_i}{\partial \theta} + \psi_{y_j} \frac{\partial y_j}{\partial \theta}. \]
Set $\theta = 0$ in the above equality and notice that at $\theta = 0$,
\[ y = x, \quad\quad \frac{\partial y_i}{\partial \theta} = - x_j, \quad\quad \frac{\partial y_j}{\partial \theta} = x_i, \quad\quad \frac{\partial v}{\partial \theta} = x_i v_j - x_j v_i. \]
We thus proved the lemma.
\end{proof}

By Lemma \ref{Lemma2} and Lemma \ref{Lemma1}, we have
\begin{equation} \label{eq3-4}
\vert L ( T v ) \vert \leq C.
\end{equation}
Choose $B$ sufficiently large such that
\[ \Psi \pm  T v \geq 0 \quad \mbox{on} \quad \partial(\Omega_{\epsilon} \cap B_{\delta} ( 0 )).   \]
From  \eqref{eq3-2} and \eqref{eq3-4} we have
\[ L ( \Psi \pm T v ) \leq A \big( - c_0 \sum G^{ii} - 1 \big) + B C \big( 1 + \sum G^{ii} \big) + C. \]
Choose $A$ sufficiently large such that
\[ L ( \Psi \pm T v ) \leq 0 \quad \mbox{in} \quad \Omega_{\epsilon} \cap B_{\delta} ( 0 ). \]
By the maximum principle,
\[ \Psi \pm T v \geq 0  \quad \mbox{in} \quad \Omega_{\epsilon} \cap B_{\delta} ( 0 ), \]
which implies
\begin{equation} \label{eq3-5}
\vert v_{\alpha n} (0) \vert \leq C.
\end{equation}

Up to now, we have proved that
\[ |v_{\xi\eta} (x)| \leq C, \quad | v_{\xi \gamma} (x) | \leq C, \quad\quad \forall \quad x \in \Gamma_{\epsilon}, \]
where $\xi$ and $\eta$ are any unit tangential vectors and $\gamma$ the unit interior normal vector to $\Gamma_{\epsilon}$ on $\Omega_{\epsilon}$.
It suffices to give an upper bound
\begin{equation} \label{eq3-10}
v_{\gamma\gamma}  \leq C  \quad \mbox{on} \quad \Gamma_{\epsilon}.
\end{equation}
Motivated by \cite{Cruz} (see also \cite{Guan99, Tru}), we derive \eqref{eq3-10}.

First recall some general facts. The projection of $\Gamma_k \subset \mathbb{R}^n$ onto $\mathbb{R}^{n - 1}$ is exactly
\[
\Gamma'_{k - 1} = \,\{ (\lambda_1, \ldots, \lambda_{n-1}) \in \mathbb{R}^{n - 1} \,| \, \sigma_{j} ( \lambda_1, \ldots, \lambda_{n-1} ) > 0,\,\, \, j = 1, \ldots, k - 1 \}.
\]
Let $\kappa' = (\kappa'_1, \ldots, \kappa'_{n-1})$ be the roots of
\begin{equation}  \label{eq3-11}
\det (  \kappa'_{\zeta} \, g_{\alpha\beta} - h_{\alpha \beta}  ) = 0,
\end{equation}
where $(h_{\alpha\beta})$ and $(g_{\alpha\beta})$ are the first $(n - 1) \times (n - 1)$ principal minors of  $(h_{ij})$ and $(g_{ij})$ respectively.
Then $\kappa [v] \in \Gamma_k$ implies $\kappa'[v] \in \Gamma'_{k - 1}$, and this is true for any local frame field.  Note that $\kappa'[v]$ may not be $(\kappa_1, \ldots, \kappa_{n-1})[v]$.

For $x \in \Gamma_{\epsilon}$, let the indices in \eqref{eq3-11} be given by the tangential directions to $\Gamma_{\epsilon}$ and $\kappa'[v](x)$ be the roots of \eqref{eq3-11}.   Define
\[ \tilde{d} (x)  =\, \sqrt{v} \,W  \,\,\mbox{dist} ( \kappa'[v](x), \,\partial \Gamma'_{k - 1} ) \quad\quad \mbox{and} \quad\quad m = \min\limits_{x \in \Gamma_{\epsilon}}\,\tilde{d} (x). \]
Choose a coordinate system in $\mathbb{R}^n$ such that $m$ is achieved at $0 \in \Gamma_{\epsilon}$  and the positive $x_n$ axis points to the interior normal of $\Gamma_{\epsilon}$ at $0$.
We want to prove that $m$ has a uniform positive lower bound.

Let $\xi_1, \ldots, \xi_{n-1}, \gamma$ be a local frame field around $0$ on $\Omega_{\epsilon}$, obtained by parallel translation of a local frame field $\xi_1, \ldots, \xi_{n-1}$ around $0$ on $\Gamma_{\epsilon}$ satisfying
\[ g_{\alpha \beta} = \delta_{\alpha\beta}, \quad\quad h_{\alpha \beta}(0) = \kappa'_{\alpha}(0) \, \delta_{\alpha\beta}, \quad\quad \kappa'_1 (0) \leq \ldots \leq \kappa'_{n-1} (0) \]
and the interior, unit, normal vector field $\gamma$ to $\Gamma_{\epsilon}$, along the directions perpendicular to $\Gamma_{\epsilon}$ on $\Omega_{\epsilon}$.
We can see that this choice of frame field has nothing to do with $v$ (or equivalently, $u$). In fact, if we denote
\[ \xi_{\alpha} = \sum_{\beta = 1}^{n - 1} \eta_{\alpha}^{\beta} \,e_{\beta}, \quad\quad \alpha = 1, \ldots, n - 1,  \]
where $e_1, \ldots, e_{n - 1}$ is a fixed local orthonormal frame on $\Gamma_{\epsilon}$,
and consider a general boundary value condition, say $v = \varphi$ on $\Gamma_{\epsilon}$,
then on $\Gamma_{\epsilon}$,
\[ \begin{aligned}
g_{\alpha\beta} = & \frac{1}{u^2} \Big( \xi_{\alpha} \cdot \xi_{\beta} + D_{\xi_{\alpha}} u \, D_{\xi_{\beta}} u \Big) = \frac{1}{\varphi} \Big( \xi_{\alpha} \cdot \xi_{\beta} + D_{\xi_{\alpha}} (\sqrt{\varphi}) \, D_{\xi_{\beta}} (\sqrt{\varphi})  \Big)
\\= & \frac{1}{\varphi} \sum\limits_{\tau, \zeta = 1}^{n - 1} \eta_{\alpha}^{\tau} \,\Big( \delta_{\tau\zeta} + \frac{D_{e_{\tau}}\varphi \, D_{e_{\zeta}}\varphi}{4 \varphi} \Big) \,\eta_{\beta}^{\zeta}.
\end{aligned} \]
Note that there exist $\eta_{\alpha}^{\tau}$
for $\alpha, \tau  = 1, \ldots, n-1$ such that $g_{\alpha\beta} = \delta_{\alpha\beta}$ on $\Gamma_{\epsilon}$. By a rotation, we can further make $(h_{\alpha \beta}(0))$ to be diagonal.

By Lemma 6.1 of \cite{CNSIII}, there exists $\mu = (\mu_1, \ldots, \mu_{n-1}) \in \mathbb{R}^{n - 1}$ with $\mu_1 \geq \ldots \geq \mu_{n - 1} \geq 0$ such that \[ \sum\limits_{\alpha = 1}^{n - 1} \mu_{\alpha}^2 = 1, \quad \quad \Gamma'_{k-1} \subset \{ \lambda' \in \mathbb{R}^{n-1} \,|\, \mu \cdot \lambda' > 0 \} \quad \quad \mbox{and}   \]
\begin{equation} \label{eq3-35}
m = \tilde{d}(0) =   \,\sqrt{v} \, W \, \sum\limits_{\alpha < n} \mu_{\alpha} \,\kappa'_{\alpha} (0) =  \,\sum\limits_{\alpha < n} \,\mu_{\alpha} \,\big( D_{\xi_{\alpha} \xi_{\alpha}} v  +  2 \,\xi_{\alpha} \cdot \xi_{\alpha} \big) (0).
\end{equation}

Since $\underline{v}$ is strictly locally convex near $\Gamma_{\epsilon}$ and $\sum \mu_{\alpha} \geq 1$,
\[\sum\limits_{\alpha < n}  \mu_{\alpha} \big( D_{\xi_{\alpha} \xi_{\alpha}}  \underline{v} + 2 \,\xi_{\alpha} \cdot \xi_{\alpha} \big) (0)  \,\geq \,  2 \, c_1  \]
for a uniform positive constant $c_1$.  Consequently,
\begin{equation} \label{eq3-6}
\begin{aligned}
&  (\underline{v} - v)_{\gamma} (0)\, \sum\limits_{\alpha < n} \mu_{\alpha} \, d_{\xi_{\alpha} \xi_{\alpha}} (0)  = \sum\limits_{\alpha < n} \mu_{\alpha} D_{\xi_{\alpha} \xi_{\alpha}} ( \underline{v} - v ) (0) \\ = \,\, & \sum\limits_{\alpha < n}  \mu_{\alpha} \big( D_{\xi_{\alpha} \xi_{\alpha}}  \underline{v} + 2 \,\xi_{\alpha} \cdot \xi_{\alpha} \big) (0) - \sum\limits_{\alpha < n}  \mu_{\alpha} \big( D_{\xi_{\alpha} \xi_{\alpha}}  v +  2 \,\xi_{\alpha} \cdot \xi_{\alpha} \big) (0) \geq  2 \, c_1 - \tilde{d}(0).
\end{aligned}
\end{equation}
The first line in \eqref{eq3-6} is true, since we can write $v - \underline{v} = \omega \,\,d$
for some function $\omega$ defined in a neighborhood of $\Gamma_{\epsilon}$ in $\Omega_{\epsilon}$.
Differentiate this identity,
\[ ( v - \underline{v} )_i = \,\omega_i \,\,d + \omega \,d_i, \quad \quad ( v - \underline{v} )_{\gamma} = \,\omega_{\gamma} \,d + \omega \,d_{\gamma},\]
\[ ( v - \underline{v} )_{ij} = \,\omega_{ij} \,\,d + \omega_i \,d_j + \omega_j \,d_i + \omega \, d_{ij}. \]
Note that $d_{\xi_{\alpha}} (0) = 0$ and $d_{\gamma} (0) = 1$. Thus,
\begin{equation*}
D_{\xi_{\alpha} \xi_{\alpha}}( v - \underline{v} ) (0)  =  ( v - \underline{v} )_{\gamma}(0) \, d_{\xi_{\alpha} \xi_{\alpha}}(0).
\end{equation*}

We may assume
$\tilde{d} (0) \leq  \, c_1$, for, otherwise we are done. Then from \eqref{eq3-6},
\[  ( \underline{v} - v )_{\gamma} (0) \sum\limits_{\alpha < n} \mu_{\alpha}\, d_{\xi_{\alpha} \xi_{\alpha}}  (0) \geq    c_1. \]
Since $0 <  ( v - \underline{v} )_{\gamma} (0) \leq C$,
\[ \sum\limits_{\alpha < n} \mu_{\alpha}\, d_{\xi_{\alpha} \xi_{\alpha}} (0)  \leq  - \,2 \, c_2  \]
for some uniform constant $c_2 > 0$.
By continuity of $d_{\xi_{\alpha} \xi_{\alpha}} (x)$ at $0$ and $0 \leq \mu_{\alpha} \leq 1$,
\begin{equation*}
 \sum\limits_{\alpha < n} \mu_{\alpha}\,\Big( d_{\xi_{\alpha} \xi_{\alpha}} (x)  - d_{\xi_{\alpha} \xi_{\alpha}} (0) \Big)  <  \sum\limits_{\alpha < n} \mu_{\alpha}\,\frac{c_2}{n - 1}    \leq c_2  \quad \quad\mbox{in} \quad
 \Omega_\epsilon \cap B_{\delta}(0)
\end{equation*}
for some uniform constant $\delta > 0$. Thus
\begin{equation} \label{eq3-7}
\sum\limits_{\alpha < n} \mu_{\alpha} \,d_{\xi_{\alpha} \xi_{\alpha}} (x) \,<\, - c_2  \quad \quad\mbox{in} \quad
 \Omega_\epsilon \cap B_{\delta}(0).
\end{equation}

On the other hand, by Lemma 6.2 of \cite{CNSIII}, for any $x \in \Gamma_{\epsilon}$ near $0$,
\[\begin{aligned}
& \,\sum\limits_{\alpha < n} \,\mu_{\alpha} \,\Big( D_{\xi_{\alpha} \xi_{\alpha}} v + 2 \,\xi_{\alpha} \cdot \xi_{\alpha} \Big) (x) \,=  \sum\limits_{\alpha < n} \,\mu_{\alpha}  \sqrt{v}\, W \,h_{\alpha \alpha} (x)  \\
\geq & \, \sqrt{v} \, W \,\sum\limits_{\alpha < n} \,\mu_{\alpha} \,\kappa'_{\alpha} [ v ] (x) \,\geq \,\tilde{d} (x)\, \geq\, \tilde{d} (0).
\end{aligned}\]
Thus for any $x \in \Gamma_{\epsilon}$ near $0$,
\begin{equation}  \label{eq3-34}
\begin{aligned}
& ( v - \varphi)_{\gamma} (x) \sum\limits_{\alpha < n} \mu_{\alpha}\, d_{\xi_{\alpha} \xi_{\alpha}} (x) = \sum\limits_{\alpha < n} \mu_{\alpha}\, D_{\xi_{\alpha} \xi_{\alpha}} ( v - \varphi ) (x) \\
= \,\, & \sum\limits_{\alpha < n}  \mu_{\alpha} \Big( D_{\xi_{\alpha} \xi_{\alpha}}  v + 2\, \xi_{\alpha} \cdot \xi_{\alpha} \Big) (x) - \sum\limits_{\alpha < n}  \mu_{\alpha} \Big( D_{\xi_{\alpha} \xi_{\alpha}}  \varphi + 2 \,\xi_{\alpha} \cdot \xi_{\alpha} \Big) (x)  \\
\geq  \,\,&\, \tilde{d}(0) - \sum\limits_{\alpha < n}  \mu_{\alpha} \Big( D_{\xi_{\alpha} \xi_{\alpha}}  \varphi +  2 \,\xi_{\alpha} \cdot \xi_{\alpha} \Big) (x).
\end{aligned}
\end{equation}
In view of \eqref{eq3-7}, define in $\Omega_\epsilon \cap B_{\delta}(0)$,
\[ \Phi \, =  \,  \frac{1}{\sum\limits_{\alpha < n} \mu_{\alpha}\, d_{\xi_{\alpha} \xi_{\alpha}} } \,\left( \,\tilde{d}(0) - \sum\limits_{\alpha < n}  \mu_{\alpha} \Big( D_{\xi_{\alpha} \xi_{\alpha}}  \varphi + 2 \,\xi_{\alpha} \cdot \xi_{\alpha} \Big)  \right)   -  ( v - \varphi)_{\gamma}. \]
By \eqref{eq3-7} and \eqref{eq3-34},  $\Phi \geq 0$ on $\Gamma_{\epsilon} \cap B_{\delta} (0)$.
In addition, we have in $\Omega_\epsilon \cap B_{\delta}(0)$,
\begin{equation} \label{eq3-9}
L(\Phi) \leq \, C \big( 1 + \sum G^{ii} \big) - L \Big( D ( v - \varphi ) \cdot D d \Big)
\leq  \, C \big( 1 + \sum G^{ii} \big).
\end{equation}
This is because $0 \leq \mu_{\alpha} \leq 1$ and
\[ \begin{aligned}
& \Big| L \big( D ( v  - \varphi ) \cdot D d \big)  \Big|  =   \Big| D d \cdot L \big( D (v - \varphi) \big) + D ( v - \varphi) \cdot L (D d) + 2 G^{st} (v - \varphi)_{is} d_{it} \Big| \\
\leq & \, C \big( 1 +  \sum G^{ii} \big) \, +  \,\Big\vert 2\, G^{st}\, d_{it} \Big( \frac{W}{\sqrt{v}} \gamma_{ki} \gamma_{sl} a_{kl} - 2 \delta_{is} \Big) \Big\vert \\
= & \, C \big( 1 +  \sum G^{ii} \big) \, +  \,\Big\vert 2\,\gamma_{ki} d_{it} \gamma^{tj}\, F^{lj}\, a_{kl} \, - 4 \, G^{st}\, d_{st} \Big\vert
\leq  \, C \big( 1 +  \sum G^{ii} \big).
 \end{aligned} \]
By \eqref{eq3-2} and \eqref{eq3-9}, we may choose $A > > B > > 1$  such that $\Psi + \Phi \geq 0$ on $\partial(\Omega_{\epsilon} \cap B_{\delta}(0))$ and $L(\Psi + \Phi) \leq 0$ in $\Omega_{\epsilon} \cap B_{\delta}(0)$. By the maximum principle, $\Psi + \Phi \geq 0$ in $\Omega_{\epsilon} \cap B_{\delta}(0)$. Since $(\Psi + \Phi)(0) = 0$ by \eqref{eq3-34} and \eqref{eq3-35}, we have $(\Psi + \Phi)_n (0) \geq 0$. Therefore, $v_{nn} (0) \leq C$, which,
together with \eqref{eq2B-3} and \eqref{eq3-5}, gives a bound $|D^2 v (0)| \leq C$, and consequently a bound for all the principal curvatures at $0$. By \eqref{eqn5},
\[ \mbox{dist} ( \kappa[v](0), \,\partial\Gamma_k )  \geq c_3  \]
and therefore on $\Gamma_{\epsilon}$,
\[ \tilde{d}(x) \geq \tilde{d}(0) = \sqrt{v}\, W\,\mbox{dist} ( \kappa'[v](0), \,\partial\Gamma'_{k - 1} )   \geq c_4,  \]
where $c_3$ and $c_4$ are positive uniform constants.

By a proof similar to Lemma 1.2 of \cite{CNSIII}, we know that there exists $R > 0$ depending on the bounds \eqref{eq2B-3} and \eqref{eq3-5} such that if $v_{\gamma\gamma}(x_0) \geq R$ and $x_0 \in \Gamma_{\epsilon}$, then the principal curvatures $(\kappa_1, \ldots, \kappa_n)$ at $x_0$ satisfy
\[ \kappa_{\alpha} = \kappa'_{\alpha} + o(1), \quad\quad \alpha < n, \]
\[ \kappa_n = \frac{ h_{nn} - g_{1n} h_{n1} - \ldots - g_{n n-1} h_{n n-1} }{g_{n n} - g_{1 n}^2 - \ldots - g_{n n-1}^2} \Big( 1 + \mathcal{O} \Big( \frac{g_{n n} - g_{1 n}^2 - \ldots - g_{n n-1}^2}{ h_{nn} - g_{1n} h_{n1} - \ldots - g_{n n-1} h_{n n-1} } \Big) \Big) \]
in the local frame $\xi_1, \ldots, \xi_{n-1}, \gamma$ around $x_0$.
When $R$ is sufficiently large, we have
\[ G( D^2 v, D v, v ) (x_0) \, > \, \psi(x_0, \epsilon^2),\]
contradicting with equation \eqref{eqn16}.
Hence $v_{\gamma\gamma} <  R$ on $\Gamma_{\epsilon}$. \eqref{eq3-10} is proved.

\vspace{6mm}

\section{Global curvature estimates}

\vspace{4mm}

For a hypersurface $\Sigma\subset\mathbb{H}^{n+1}$, let $g$ and $\nabla$ be the induced hyperbolic metric and Levi-Civita connection on $\Sigma$ respectively, and let $\tilde{g}$ and $\tilde{\nabla}$ be the metric and Levi-Civita connection induced from $\mathbb{R}^{n+1}$ when $\Sigma$ is viewed as a hypersurface in $\mathbb{R}^{n+1}$. The Christoffel symbols associated with $\nabla$ and $\tilde{\nabla}$ are related by the formula
\[ \Gamma_{ij}^k = \tilde{\Gamma}_{ij}^k - \frac{1}{u} (u_i \delta_{kj} + u_j \delta_{ik} - \tilde{g}^{kl} u_l \tilde{g}_{ij}). \]
Consequently, for any $v \in C^2(\Sigma)$,
\begin{equation} \label{eqC-3}
\nabla_{ij} v = (v_i)_j - \Gamma_{ij}^k v_k = \tilde{\nabla}_{ij} v + \frac{1}{u}( u_i v_j + u_j v_i - \tilde{g}^{kl} u_l v_k \tilde{g}_{ij} ).
\end{equation}
Note that \eqref{eqC-3} holds for any local frame.

\begin{lemma}  \label{Lemma0-3}
In $\mathbb{R}^{n+1}$, we have the following identities.
\begin{equation} \label{eq0-1}
\tilde{g}^{kl} u_k u_l  = |\tilde\nabla u|^2 = 1 - (\nu^{n+1})^2,
\end{equation}
\begin{equation}  \label{eq0-2}
\tilde{\nabla}_{ij} u = \tilde{h}_{ij} \nu^{n+1} \quad \mbox{and} \quad \tilde{\nabla}_{ij} x_{k} = \tilde{h}_{ij} \nu^{k}, \quad k = 1, \ldots, n,
\end{equation}
\begin{equation}  \label{eq0-4}
(\nu^{n+1})_i = - \tilde{h}_{ij} \,\tilde{g}^{j k} u_k,
\end{equation}
\begin{equation} \label{eq0-5}
\tilde{\nabla}_{ij} \nu^{n+1} = - \tilde{g}^{kl} ( \nu^{n+1} \tilde{h}_{il} \tilde{h}_{kj} + u_l \tilde{\nabla}_k \tilde{h}_{ij} ),
\end{equation}
where $\tau_1, \ldots, \tau_n$ is any local frame on $\Sigma$.
\end{lemma}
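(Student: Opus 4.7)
The plan is to treat the four identities in order, since each uses the previous ones. For \eqref{eq0-1}, I would just substitute the explicit formula $\tilde{g}^{ij}=\delta_{ij}-u_iu_j/w^2$ from section 2 and compute:
$$ \tilde{g}^{kl}u_k u_l = |Du|^2 - \frac{|Du|^4}{w^2} = \frac{|Du|^2 (1+|Du|^2) - |Du|^4}{w^2} = \frac{|Du|^2}{w^2} = 1 - \frac{1}{w^2},$$
and recall $\nu^{n+1}=1/w$.

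For \eqref{eq0-2}, I would exploit the fact that $u = x_{n+1}\big|_\Sigma$ and $x_\alpha$ are restrictions of \emph{linear} functions on $\mathbb{R}^{n+1}$, whose ambient Hessians vanish. The Gauss formula, applied to any function $f$ on $\mathbb{R}^{n+1}$ restricted to $\Sigma$, gives
$$ \tilde\nabla_{ij}\bar f = D^2 f(X_i,X_j) + \tilde h_{ij}\,\langle Df,\nu\rangle, $$
where $X_i=\partial_i+u_i\partial_{n+1}$ are the coordinate vector fields and I used $\tilde\nabla^{\mathbb{R}^{n+1}}_{X_j}X_i=\tilde\nabla^{\Sigma}_{X_j}X_i+\tilde h_{ij}\nu$. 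Taking $f=x_{n+1}$ and $f=x_\alpha$ kills the first term and leaves $\tilde h_{ij}\nu^{n+1}$ and $\tilde h_{ij}\nu^\alpha$ respectively.

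For \eqref{eq0-4}, I would use the Weingarten equation in $\mathbb{R}^{n+1}$: for the unit normal $\nu$,
$$ \bar D_{X_i}\nu = -\tilde h_{ij}\,\tilde g^{jk}X_k. $$
Reading off the $(n+1)$-component and noting $(X_k)^{n+1}=u_k$ gives the identity directly.

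The last identity \eqref{eq0-5} is the only one needing a bit of computation; this is where the main (though still routine) work sits. I would differentiate \eqref{eq0-4} covariantly, use metric compatibility ($\tilde\nabla_j\tilde g^{lk}=0$), and substitute \eqref{eq0-2} for the Hessian of $u$:
$$ \tilde\nabla_{ij}\nu^{n+1} = -\tilde g^{lk}\bigl(\tilde\nabla_j\tilde h_{il}\bigr)u_k - \tilde g^{lk}\tilde h_{il}\,\tilde\nabla_{jk}u = -\tilde g^{lk}\bigl(u_k\tilde\nabla_j\tilde h_{il} + \nu^{n+1}\tilde h_{il}\tilde h_{jk}\bigr). $$
A relabeling $k\leftrightarrow l$ together with the Codazzi equation $\tilde\nabla_j\tilde h_{ik}=\tilde\nabla_k\tilde h_{ij}$ (valid in the Euclidean ambient space) rearranges this into the stated form $-\tilde g^{kl}(\nu^{n+1}\tilde h_{il}\tilde h_{kj}+u_l\tilde\nabla_k\tilde h_{ij})$. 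The only real thing to be careful about is making sure the covariant derivative $\tilde\nabla_{ij}$ of the scalar $\nu^{n+1}$ is the Hessian with respect to $\tilde\nabla$ (so that $\tilde\nabla_j(\nu^{n+1})_i$ really equals $\tilde\nabla_j\tilde\nabla_i\nu^{n+1}$) and keeping the index bookkeeping consistent.
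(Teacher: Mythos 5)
Your proof is correct. The paper states Lemma \ref{Lemma0-3} without proof, treating these as standard hypersurface identities, and your argument is precisely the natural one: explicit substitution of $\tilde g^{ij}=\delta_{ij}-u_iu_j/w^2$ and $\nu^{n+1}=1/w$ for \eqref{eq0-1}; the Gauss-formula Hessian identity $\tilde\nabla_{ij}\bar f = D^2 f(X_i,X_j)+\tilde h_{ij}\langle Df,\nu\rangle$ applied to the ambient linear functions $x_{n+1}$ and $x_\alpha$ for \eqref{eq0-2}; the Weingarten equation $\bar D_{X_i}\nu=-\tilde h_{ij}\tilde g^{jk}X_k$ read off in the $(n+1)$ slot for \eqref{eq0-4}; and covariant differentiation of \eqref{eq0-4} combined with \eqref{eq0-2}, metric compatibility, and the flat-ambient Codazzi symmetry $\tilde\nabla_j\tilde h_{ik}=\tilde\nabla_k\tilde h_{ij}$ for \eqref{eq0-5}. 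The index bookkeeping in the last step checks out: after $k\leftrightarrow l$ the curvature-squared term becomes $\tilde g^{kl}\tilde h_{ik}\tilde h_{jl}=\tilde g^{kl}\tilde h_{il}\tilde h_{kj}$ by symmetry of $\tilde g$ and $\tilde h$, and Codazzi converts $u_l\tilde\nabla_j\tilde h_{ik}$ to $u_l\tilde\nabla_k\tilde h_{ij}$, matching \eqref{eq0-5}.
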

\begin{proof}
To prove \eqref{eq0-1}, we may write
\begin{equation} \label{eq0-8}
\partial_{n + 1} = \sum\limits_{k = 1}^n a_k \tau_k + b \nu.
\end{equation}
Taking inner product of \eqref{eq0-8} with $\nu$ in $\mathbb{R}^{n+1}$,  we obain
\[ \nu^{n + 1} = \partial_{n + 1} \cdot \nu = b.\]
Taking inner product of \eqref{eq0-8} with $\tau_j$ in $\mathbb{R}^{n+1}$,  we have
\[ u_j = (X \cdot \partial_{n+1})_j = \partial_{n + 1} \cdot \tau_j = a_k \tau_k \cdot \tau_j = a_k \tilde{g}_{kj},  \]
where $X$ is the position vector field of $\Sigma$ (note that this is different from the conformal Killing field when using half space model for $\mathbb{H}^{n + 1}$).
Thus,
\[ a_k = u_j \tilde{g}^{jk}. \]
Therefore,
\[ \partial_{n + 1} =  u_j \tilde{g}^{jk} \tau_k +  \nu^{n + 1} \nu =  \tilde{\nabla} u +  \nu^{n + 1} \nu, \]
which implies \eqref{eq0-1}.

For \eqref{eq0-2}, note that
\[\begin{aligned}
\, & \tilde{\nabla}_{ij} (X \cdot \partial_k) = \big( (X \cdot \partial_k)_j \big)_i - \tilde{\Gamma}_{ij}^l (X \cdot \partial_k)_l \\
= \, &  (\tau_j \cdot \partial_k)_i - \tilde{\Gamma}_{ij}^l \,\tau_l \cdot \partial_k = \tilde{D}_{\tau_i} \tau_j \cdot \partial_k - \tilde{\Gamma}_{ij}^l \,\tau_l \cdot \partial_k \\
= \, & ( \tilde{\nabla}_{\tau_i} \tau_j +  \tilde{h}_{ij} \nu ) \cdot \partial_k - \tilde{\Gamma}_{ij}^l \,\tau_l \cdot \partial_k = \tilde{h}_{ij} \nu  \cdot \partial_k, \quad \quad k = 1, \ldots, n+1.
\end{aligned}\]
Here we have applied the Gauss formula for $\Sigma$ as a hypersurface in $\mathbb{R}^{n+1}$.

For \eqref{eq0-4}, by the Weingarten formula for $\Sigma$ as a hypersurface in $\mathbb{R}^{n+1}$, we have
\[ (\nu^{n+1})_i = (\nu \cdot \partial_{n + 1})_i  = \tilde{D}_{\tau_i} \nu \cdot \partial_{n + 1} = - \tilde{h}_{ik} \,\tilde{g}^{kl} \tau_l \cdot \partial_{n + 1} = - \tilde{h}_{ik} \tilde{g}^{kl} u_l. \]

Finally, \eqref{eq0-5} follows from \eqref{eq0-4}, \eqref{eq0-2} and the Codazzi equation for $\Sigma$ as a hypersurface in $\mathbb{R}^{n + 1}$. In fact,
\[\tilde{\nabla}_{ij} \nu^{n+1} = - \tilde{g}^{kl} ( u_l \tilde{\nabla}_i \tilde{h}_{jk}  + \tilde{h}_{jk}  \tilde{\nabla}_{il} u ) = - \tilde{g}^{kl} (  u_l \tilde{\nabla}_k \tilde{h}_{ij} + \nu^{n+1} \tilde{h}_{il} \tilde{h}_{jk} ). \]
\end{proof}

\begin{lemma}  \label{LemmaC-1}
Let $\Sigma$ be a strictly locally convex hypersurface in $\mathbb{H}^{n+1}$ satisfying equation \eqref{eq1-1}. Then in a local orthonormal frame on $\Sigma$,
\begin{equation}  \label{eqC-5}
\begin{aligned}
F^{ij} \nabla_{ij} \nu^{n+1}
= & - \nu^{n+1} F^{ij} h_{ik} h_{kj} + \big( 1 + (\nu^{n+1})^2 \big) F^{ij} h_{ij} - \nu^{n+1} \sum f_i \\ & - \frac{2}{u^2} F^{ij} h_{jk} u_i u_k + \frac{2 \nu^{n+1}}{u^2} F^{ij} u_i u_j - \frac{u_k}{u} \psi_k.
\end{aligned}
\end{equation}
\end{lemma}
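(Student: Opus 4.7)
The strategy is to convert $F^{ij}\nabla_{ij}\nu^{n+1}$ into Euclidean quantities on $\Sigma \subset \mathbb{R}^{n+1}$ via \eqref{eqC-3} and Lemma \ref{Lemma0-3}, then translate the Euclidean second fundamental form $\tilde h$ back to $h$ using \eqref{eq0-3}. I would work in a $g$-orthonormal frame on $\Sigma$, so that $g_{ij}=\delta_{ij}$, $\tilde g_{ij}=u^{2}\delta_{ij}$, $\tilde g^{ij}=u^{-2}\delta_{ij}$, and the conversion formula reads $\tilde h_{ij}=u(h_{ij}-\nu^{n+1}\delta_{ij})$. Applying \eqref{eqC-3} to $v=\nu^{n+1}$ and contracting with $F^{ij}$ gives
\[
F^{ij}\nabla_{ij}\nu^{n+1} \;=\; F^{ij}\tilde\nabla_{ij}\nu^{n+1} + \tfrac{2}{u}F^{ij}u_i(\nu^{n+1})_j - \tfrac{u_k(\nu^{n+1})_k}{u}\sum f_i .
\]
Using \eqref{eq0-4} in this frame gives $(\nu^{n+1})_k = -u^{-1}\bigl(h_{k\ell}u_\ell-\nu^{n+1}u_k\bigr)$, so the middle term on the right already produces the two pieces $-\tfrac{2}{u^2}F^{ij}h_{jk}u_iu_k+\tfrac{2\nu^{n+1}}{u^2}F^{ij}u_iu_j$ of the target formula.

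Next I would substitute \eqref{eq0-5} into $F^{ij}\tilde\nabla_{ij}\nu^{n+1}$, obtaining the two pieces $-\tfrac{\nu^{n+1}}{u^2}F^{ij}\tilde h_{ik}\tilde h_{kj}$ and $-\tfrac{u_k}{u^2}F^{ij}\tilde\nabla_k\tilde h_{ij}$. Expanding $\tilde h = u(h-\nu^{n+1} I)$ in the first piece yields
\[
-\nu^{n+1}F^{ij}h_{ik}h_{kj}+2(\nu^{n+1})^{2}F^{ij}h_{ij}-(\nu^{n+1})^{3}\sum f_i .
\]
Combined with the $\sum f_i$ contribution from $-\tfrac{u_k(\nu^{n+1})_k}{u}\sum f_i$, and using $\sum_k u_k^{2}=u^{2}(1-(\nu^{n+1})^{2})$ from \eqref{eq0-1}, the $\sum f_i$ coefficients collapse to exactly $-\nu^{n+1}\sum f_i$. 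What remains is to process the third-order term $F^{ij}u_k\tilde\nabla_k\tilde h_{ij}$: I would differentiate the tensor identity $\tilde h_{ij}=u\,h_{ij}-(\nu^{n+1}/u)\tilde g_{ij}$ using $\tilde\nabla$ (using $\tilde\nabla\tilde g=0$), then convert $\tilde\nabla_k h_{ij}$ to $\nabla_k h_{ij}$ via the Christoffel-difference formula opening Section 4, and finally invoke Codazzi $\nabla_k h_{ij}=\nabla_i h_{jk}$ together with the differentiated equation $F^{ij}\nabla_k h_{ij}=\psi_k$. This is what produces the $-\tfrac{u_k}{u}\psi_k$ term in \eqref{eqC-5}, and it also supplies the missing $(1-(\nu^{n+1})^{2})F^{ij}h_{ij}$ that upgrades the $2(\nu^{n+1})^{2}F^{ij}h_{ij}$ above into $(1+(\nu^{n+1})^{2})F^{ij}h_{ij}$.

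The main obstacle is the bookkeeping in the last step. Converting $\tilde\nabla_k\tilde h_{ij}$ into $\nabla_k h_{ij}$ introduces several correction terms of the form $u_*/u$ contracted with $h$ and with $u_*$, coming from the Christoffel difference and from differentiating $\nu^{n+1}/u$. These must precisely cancel the stray $F^{ij}h_{k\ell}u_k u_\ell$-type and $u_iu_j$-type pieces left over from Steps 1 and 2, and combine cleanly to produce the $(1+(\nu^{n+1})^{2})F^{ij}h_{ij}$ coefficient together with $-\tfrac{u_k}{u}\psi_k$. Once this cancellation is verified, \eqref{eqC-5} follows by assembling all contributions; everywhere along the way the identity $\sum_k u_k^{2}=u^{2}(1-(\nu^{n+1})^{2})$ and the pointwise orthonormal reduction $\sum F^{ii}=\sum f_i$ are the simplifications that make the coefficients align.
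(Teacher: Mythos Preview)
Your approach is correct and shares the same opening moves with the paper: apply \eqref{eqC-3} to $\nu^{n+1}$, insert \eqref{eq0-4} and \eqref{eq0-5}, and convert $\tilde h$ to $h$ via \eqref{eq0-3}. The genuine divergence is in how the third-order piece $\tfrac{u_k}{u^2}F^{ij}\tilde\nabla_k\tilde h_{ij}$ is handled. You propose to expand $\tilde h_{ij}=u\,h_{ij}-(\nu^{n+1}/u)\tilde g_{ij}$, push $\tilde\nabla$ through, then convert $\tilde\nabla_k h_{ij}$ to $\nabla_k h_{ij}$ with the Christoffel-difference formula before invoking $F^{ij}\nabla_k h_{ij}=\psi_k$. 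This works, but as you anticipate it generates a batch of $u_*/u$ correction terms whose cancellation has to be checked by hand. The paper sidesteps all of that bookkeeping: it observes that the curvature equation can be written in Euclidean form as $F\big(u\,\tilde g^{il}\tilde h_{lj}+\nu^{n+1}\delta_{ij}\big)=\psi$, differentiates this identity once with $\tilde\nabla_k$, and multiplies by $u_k/u$. That single line gives
\[
\frac{u_k^2}{u^3}F^{ij}\tilde h_{ij}+\frac{u_k}{u^2}F^{ij}\tilde\nabla_k\tilde h_{ij}+\frac{u_k}{u}(\nu^{n+1})_k\sum f_i=\frac{u_k}{u}\psi_k,
\]
which eliminates the troublesome term $F^{ij}\tilde\nabla_k\tilde h_{ij}$ directly, with no Christoffel conversion at all. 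After substitution only zeroth- and first-order quantities in $\tilde h$ remain, and the final translation $\tilde h_{ij}=u(h_{ij}-\nu^{n+1}\delta_{ij})$ is immediate. So your route reaches the same destination, but the paper's trick of differentiating the \emph{Euclidean} form of the equation is what buys the clean cancellation you are bracing for.
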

\begin{proof}
By \eqref{eqC-3}, \eqref{eq0-5},
\begin{equation} \label{eq0-6}
\begin{aligned}
\,\,& F^{ij} \nabla_{ij} \nu^{n+1} \\ = & \, \,F^{ij} \Big(\tilde{\nabla}_{ij} \nu^{n+1} + \frac{1}{u}\big( u_i (\nu^{n+1})_j + u_j (\nu^{n+1})_i - \tilde{g}^{kl} u_l (\nu^{n+1})_k \tilde{g}_{ij} \big)\Big) \\
= & \,\, - \frac{\nu^{n+1}}{u^2}  F^{ij} \tilde{h}_{ik} \tilde{h}_{kj} - \frac{u_k}{u^2} F^{ij} \tilde{\nabla}_k \tilde{h}_{ij}  - \frac{2}{u^3} F^{ij} \tilde{h}_{jk} u_i u_k - \frac{u_k}{u} (\nu^{n+1})_k \,\sum f_i.
\end{aligned}
\end{equation}
Since $\Sigma$ can also be viewed as a hypersurface in $\mathbb{R}^{n+1}$,
\begin{equation*}
F(g^{il} h_{lj}) = F\Big( u^2 \tilde{g}^{il} \big( \frac{1}{u}\,\tilde{h}_{lj} + \frac{\nu^{n+1}}{u^2}\, \tilde{g}_{lj} \big)\Big) = F \Big(  u \, \tilde{g}^{il}\,\tilde{h}_{lj}  +  \nu^{n+1} \delta_{ij} \Big) = \psi.
\end{equation*}
Differentiate this equation with respect to $\tilde{\nabla}_k$ and then multiply by $\frac{u_k}{u}$,
\[ \frac{u_k^2}{u^3}\, F^{ij} \tilde{h}_{ij} + \frac{u_k}{u^2} F^{ij} \tilde{\nabla}_k \tilde{h}_{ij} + \frac{u_k}{u} (\nu^{n+1})_k \sum f_i = \, \frac{ u_k}{u} \,\psi_k.  \]
Take this identity into \eqref{eq0-6},
\begin{equation*}
F^{ij} \nabla_{ij} \nu^{n+1} =  - \frac{\nu^{n+1}}{u^2} F^{ij} \tilde{h}_{ik} \tilde{h}_{kj} - \frac{2}{u^3} F^{ij} \tilde{h}_{jk} u_i u_k + \frac{u_k^2}{u^3} F^{ij} \tilde{h}_{ij} - \frac{u_k}{u} \,\psi_k.
\end{equation*}
In view of \eqref{eq0-3}, we obtain \eqref{eqC-5}.
\end{proof}

For global curvature estimates, we use the method in \cite{GS11}. Assume
\[ \nu^{n+1} \,\geq \,2\, a > 0 \quad \quad \mbox{on} \quad \Sigma \]
for some constant $a$.  Let $\kappa_{\max} ({ \bf x })$ be the largest principal curvature of $\Sigma$ at ${\bf x}$. Consider
\begin{equation*}
M_0 = \sup\limits_{{\bf x} \in\Sigma}
\,\frac{\kappa_{\max\,}({\bf x})}{{\nu}^{n+1} - a}.
\end{equation*}
Assume $M_0 > 0$ is attained at an interior point ${ \bf x}_0 \in \Sigma$.
Let $\tau_1, \ldots, \tau_n$ be a local orthonormal frame about
${ \bf x}_0$ such that $h_{ij}({\bf x}_0) = \kappa_i \,\delta_{ij}$, where
$\kappa_1, \ldots, \kappa_n$ are the hyperbolic principal curvatures of
$\Sigma$ at ${\bf x}_0$. We may assume $\kappa_1 = \kappa_{\max\,}({\bf x}_0)$.
Thus, $\ln h_{11} - \ln ( {\nu}^{n+1} - a )$ has a
local maximum at ${\bf x}_0$, at which,
\begin{equation} \label{eq2G-1}
\frac{h_{11i}}{h_{11}} - \frac{\nabla_i \nu^{n + 1}}{\nu^{ n + 1 } - a}  = 0,
\end{equation}

\begin{equation} \label{eq2G-2}
\frac{h_{11ii}}{h_{11}} - \frac{\nabla_{ii} \nu^{n + 1}}{\nu^{n + 1} - a} \,\leq 0.
\end{equation}

Differentiate equation \eqref{eq1-1} twice,
\begin{equation}  \label{eq2G-3}
F^{ii}\,h_{ii11}\, + \,F^{ij,\,rs} h_{ij1} h_{rs1}\,=\,\psi_{11} \, \geq \, - C \kappa_1.
\end{equation}

By Gauss
equation, we have the following formula when changing the order of
differentiation for the second fundamental form,
\begin{equation} \label{eq2G-4}
h_{iijj} = h_{jjii} + ( \kappa_i\,\kappa_j - 1 )\,( \kappa_i -
\kappa_j ).
\end{equation}

Combining \eqref{eq2G-2}, \eqref{eq2G-3}, \eqref{eq2G-4} and \eqref{eqC-5} yields,

\begin{equation} \label{eq2G-5}
\begin{aligned}
& \Big( \kappa_1^2 - \frac{1 + (\nu^{n+1})^2}{\nu^{n+1} - a} \kappa_1 + 1 \Big) \,\sum f_i\,\kappa_i + \frac{a \kappa_1}{\nu^{n+1} - a} \big(\sum f_i + \sum f_i \, \kappa_i^2 \big) \\
& - F^{ij, rs}\,h_{ij1}\,h_{rs1}
+ \frac{2 \kappa_1}{\nu^{n+1} - a} \, \sum f_i \frac{u_i^2}{u^2} \big(\kappa_i - \nu^{n+1}\big) - C \kappa_1 \, \quad \leq 0.
\end{aligned}
\end{equation}

Next, take \eqref{eq0-4}, \eqref{eq0-3} into \eqref{eq2G-1},
\begin{equation*}
 h_{11i} = \frac{\kappa_1}{\nu^{n+1} - a}\, \frac{u_i}{u} (\nu^{n+1} - \kappa_i),
\end{equation*}
and recall an inequality of Andrews \cite{And} and Gerhardt \cite{Ger},
\[  - F^{ij, rs}\,h_{ij1}\,h_{rs1} \, \geq \, \sum\limits_{i \neq j} \frac{f_i - f_j}{\kappa_j - \kappa_i} h_{ij1}^2 \,\geq \, 2 \sum\limits_{i \geq 2} \frac{f_i - f_1}{\kappa_1 - \kappa_i}\, h_{i11}^2. \]
Therefore, \eqref{eq2G-5} becomes,

\begin{equation} \label{eq2G-6}
\begin{aligned}
& 0  \geq  \Big( \kappa_1^2 - \frac{1 + (\nu^{n+1})^2}{\nu^{n+1} - a} \kappa_1 + 1 \Big) \,\sum f_i\,\kappa_i - C \kappa_1 + \frac{a \kappa_1}{\nu^{n+1} - a} \big(\sum f_i + \sum f_i \, \kappa_i^2 \big) \\
&  + \frac{2 \,\kappa_1^2}{(\nu^{n+1} - a)^2}\,\sum\limits_{i \geq 2} \frac{f_i - f_1}{\kappa_1 - \kappa_i}\, \frac{u_i^2}{u^2} (\nu^{n+1} - \kappa_i)^2
+ \frac{2 \kappa_1}{\nu^{n+1} - a} \, \sum f_i \frac{u_i^2}{u^2} \big(\kappa_i - \nu^{n+1}\big).
\end{aligned}
\end{equation}
For some fixed $\theta \in (0, 1)$ which will be determined later, denote
\[ J = \{  i: \, f_1 \geq \theta f_i, \quad \kappa_i < \nu^{n+1}  \},\quad \quad L = \{  i: \, f_1 < \theta f_i, \quad \kappa_i < \nu^{n+1}  \}. \]
The second line of \eqref{eq2G-6} can be estimated as follows.

\[ \begin{aligned} & \frac{2 \,\kappa_1^2}{(\nu^{n+1} - a)^2}\,\sum\limits_{i \geq 2} \frac{f_i - f_1}{\kappa_1 - \kappa_i}\, \frac{u_i^2}{u^2} (\nu^{n+1} - \kappa_i)^2
+ \frac{2 \kappa_1}{\nu^{n+1} - a}  \sum f_i \frac{u_i^2}{u^2} \big(\kappa_i - \nu^{n+1}\big) \\
\geq \, & \frac{2 \kappa_1^2}{(\nu^{n+1} - a)^2}\sum\limits_{i \in L} \frac{f_i - f_1}{\kappa_1 - \kappa_i} \frac{u_i^2}{u^2} (\nu^{n+1} - \kappa_i)^2
 +  \frac{2 \kappa_1}{\nu^{n+1} - a} \big( \sum\limits_{i \in L} +  \sum\limits_{i \in J} \big) \frac{f_i u_i^2}{u^2} \big(\kappa_i - \nu^{n+1}\big)
 \\
\geq \, & \frac{2 (1 - \theta)\kappa_1}{(\nu^{n+1} - a)^2}\sum\limits_{i \in L} \frac{f_i u_i^2}{u^2} (\nu^{n+1} - \kappa_i)^2
+  \frac{2 \kappa_1}{\nu^{n+1} - a}  \sum\limits_{i \in L} \frac{f_i u_i^2}{u^2} \big(\kappa_i - \nu^{n+1}\big)
- \frac{2}{\theta a}  \sum f_i \kappa_i  \\
= \, &
 \frac{2 \kappa_1}{\nu^{n+1} - a} \sum\limits_{i \in L} \frac{f_i u_i^2}{u^2} \Big(\frac{(\nu^{n+1} - \kappa_i)^2}{\nu^{n+1} - a}  + \kappa_i - \nu^{n+1}\Big)
\\ \,& - \frac{ 2 \,\theta \kappa_1}{(\nu^{n+1} - a)^2}\sum\limits_{i \in L} \frac{f_i u_i^2}{u^2} (\nu^{n+1} - \kappa_i)^2
- \frac{2}{\theta a} \sum f_i \kappa_i \\
\geq \, &
 - \frac{2 \kappa_1}{\nu^{n+1} - a} \sum\limits_{i \in L} \frac{f_i u_i^2}{u^2} \cdot \frac{\nu^{n+1} + a}{\nu^{n+1} - a} \, \kappa_i
- \frac{4 \theta \kappa_1}{ a (\nu^{n + 1} - a)} \sum f_i  \big( 1 + \kappa_i^2 \big) - \frac{2}{\theta a} \sum f_i \kappa_i \\
\geq \, &
- \frac{4 \theta \kappa_1}{a (\nu^{n+1} - a)} \sum f_i  \big( 1 + \kappa_i^2 \big) - \Big( \frac{2}{\theta a} + \frac{4 \kappa_1}{a^2} \Big) \sum f_i \kappa_i.
\end{aligned}
\]
Here we have applied $\tilde{g}^{kl} u_k u_l  = \frac{\delta_{kl}}{u^2} u_k u_l = 1 - (\nu^{n+1})^2$ due to \eqref{eq0-1} in deriving the above inequality.
Choosing $\theta = \frac{a^2}{4}$ and taking the above inequality into \eqref{eq2G-6}, we obtain an upper bound for $\kappa_1$.

\vspace{4mm}

\section{Existence of Strictly Locally Convex Solutions to \eqref{eqn10}}

\vspace{4mm}

The convexity of solutions is a very important prerequisite in this paper, due to the following two reasons: first, the $C^2$ boundary estimates derived in  section 3 require the condition of convexity; second, the $C^2$ interior estimates for prescribed scalar curvature equations in section 6 need certain convexity assumption (see \cite{GQ17}). Therefore, the preservation of convexity of solutions is vital in order to perform the continuity process. In this section, we first give a constant rank theorem in hyperbolic space (see \cite{CLW18, KL87, GM03, GLM06}).

\vspace{2mm}

\begin{thm} \label{Theorem5-1}
Let $\Sigma$ be a $C^4$ oriented connected hypersurface in $\mathbb{H}^{n+1}$ satisfying the prescribed curvature equation
\begin{equation}  \label{eq5-1}
\sigma_k (\kappa) \, = \, \Psi( x_1, \ldots, x_n, u ) > 0.
\end{equation}
Assume that the second fundamental form $\{h_{ij}\}$ on $\Sigma$ is positive semi-definite, and for any ${\bf x} \in \Sigma$ and a local orthonormal frame  $\tau_1, \ldots, \tau_n$ around ${\bf x}$ with $\{ h_{ij} ({\bf x}) \}$ diagonal,
\begin{equation} \label{eq5-20}
\sum\limits_{i \in B} \Big( \Psi_{ii} - \frac{k+1}{k}\,\frac{\Psi_i^2}{\Psi}  +  k \,\Psi \Big) ({\bf x}) \,\lesssim \, 0,
\end{equation}
where the symbol $\lesssim$ is defined in \cite{GM03} and $B$ is the set of bad indices of ${\bf x}$.
Then the second fundamental form on $\Sigma$ is of constant rank.
\end{thm}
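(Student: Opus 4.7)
The plan is to adapt the microscopic convexity principle of Caffarelli--Guan--Ma \cite{GM03, CLW18} to the hyperbolic setting. Let $l = \min_{{\bf x} \in \Sigma} \mbox{rank}(h_{ij}({\bf x}))$ and assume for contradiction that $l < n$. Fix an interior point $x_0 \in \Sigma$ where $\mbox{rank}(h)(x_0) = l$ and work in a small neighborhood $U$ of $x_0$; by continuity, at every point of $U$ there are $n-l$ ``bad'' eigenvalues (those tending to $0$ at $x_0$) and $l$ ``good'' eigenvalues uniformly bounded below by some $c > 0$. The goal is to construct a nonnegative auxiliary function $\phi$ that vanishes precisely on the set $\{\mbox{rank}(h) = l\}$ and satisfies the differential Harnack inequality $L \phi \lesssim 0$ in $U$ for the linearized operator $L = \sigma_k^{pq}(h)\, \nabla_{pq}$. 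Since $\{h_{ij}\}$ lies in the closed G\r arding cone, $L$ is degenerate elliptic; the strong maximum principle applied to $L$ then forces $\phi \equiv 0$ in $U$, and a standard connectedness argument propagates $\mbox{rank}(h) \equiv l$ to all of $\Sigma$, which is the conclusion of the theorem.

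Following \cite{GM03, GLM06}, I would take $\phi = \sigma_{l+1}(h) + \sigma_{l+2}(h)/\sigma_{l+1}(h)$, which agrees up to gradient-controlled error with the $(l+1)$st elementary symmetric function of the bad eigenvalues. The main step is the pointwise computation of $L\phi$: at each $x \in U$ diagonalize $h$ in a local orthonormal frame $\tau_1, \ldots, \tau_n$, label the bad indices $B$ and good indices $G$, and differentiate \eqref{eq5-1} twice in directions $\tau_i$, $i \in B$, to obtain
\[
\sum_{i \in B} \sigma_k^{pq}(h)\, \nabla_{ii} h_{pq} \;=\; \sum_{i \in B}\Big( \Psi_{ii} - \sigma_k^{pq,\,rs}(h)\, \nabla_i h_{pq}\, \nabla_i h_{rs} \Big).
\]
Commuting derivatives via the Codazzi equation together with the Gauss equation for a hypersurface in the constant-curvature space $\mathbb{H}^{n+1}$ produces correction terms of the form $(h_{ii} h_{pp} - 1)(h_{ii} - h_{pp})$ as in \eqref{eq2G-4}. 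For $i \in B$ and $p \in G$ the small factor $h_{ii}$ is negligible, and summing against $\sigma_k^{pp}(h)$ while using $\sum_{p \in G} \sigma_k^{pp}(h)\, h_{pp} = k\,\Psi + O(|\lambda_B|)$ contributes exactly the $+k\Psi$ summand appearing in hypothesis \eqref{eq5-20}.

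The principal obstacle is the non-sign-definite third-order concavity term $\sigma_k^{pq,\,rs}\, \nabla_i h_{pq}\, \nabla_i h_{rs}$. As in \cite{GM03, CLW18}, I would split the sum according to whether $p, q, r, s$ lie in $B$ or $G$. The dangerous ``bad-bad'' contributions get absorbed into $|\nabla \phi|$ by exploiting Codazzi and the fact that every bad eigenvalue and its first-order variation vanish on the minimum-rank set; the ``mixed'' contributions are controlled by the matrix arithmetic-geometric inequality. The only residue that survives after all cancellations is the gradient expression $-\tfrac{k+1}{k} \sum_{i \in B} \Psi_i^2 / \Psi$, obtained by substituting the differentiated equation $\sigma_k^{pq}(h)\, \nabla_i h_{pq} = \Psi_i$ for $i \in B$. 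Assembling everything, the master inequality becomes
\[
L \phi \;\lesssim\; \sum_{i \in B} \Big( \Psi_{ii} - \tfrac{k+1}{k}\, \tfrac{\Psi_i^2}{\Psi} + k\,\Psi \Big)({\bf x}),
\]
and the hypothesis \eqref{eq5-20} is designed to make the right-hand side $\lesssim 0$, closing the argument. The hardest calculational step is the Codazzi/Gauss-equation bookkeeping that turns the ambient hyperbolic curvature into the precise $+k\Psi$ term; the conceptual subtlety is organizing the splitting of $\sigma_k^{pq,\,rs} \nabla_i h_{pq}\, \nabla_i h_{rs}$ so that only good-eigenvalue cancellations are used in the estimate.
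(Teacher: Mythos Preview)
The paper does not actually prove Theorem~\ref{Theorem5-1}: it is stated with a parenthetical citation to \cite{CLW18, KL87, GM03, GLM06} and then immediately used as input to Proposition~\ref{prop5-1} and the existence argument. So there is no ``paper's own proof'' to compare against; the theorem is treated as a known result imported from the constant-rank literature.

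Your proposal is precisely the argument that one finds in those references, specialized to the ambient space $\mathbb{H}^{n+1}$. The outline is correct: the test function $\phi = \sigma_{l+1}(h) + \sigma_{l+2}(h)/\sigma_{l+1}(h)$, the commutation via \eqref{eq2G-4} (whose $-1$ factor coming from the hyperbolic sectional curvature is what produces the $+k\Psi$ term after contracting $\sigma_k^{pp}(\kappa_p\kappa_i - 1)(\kappa_i - \kappa_p)$ with $\kappa_i \sim 0$ for $i\in B$), and the good/bad splitting of the third-order quadratic form are all exactly as in \cite{GM03, GLM06, CLW18}. The residual $-\tfrac{k+1}{k}\Psi_i^2/\Psi$ arises from the optimal concavity of $\sigma_k^{-1/k}$, which is again the standard mechanism. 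Nothing in your sketch is wrong; it simply fills in what the paper leaves to the citations.
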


\vspace{2mm}

Let $\Sigma$ be a locally convex hypersurface to equation \eqref{eq5-1} for $k < n$ with boundary $\partial \Sigma$. If we can find a condition (we call it {\bf Condition I}) to guarantee that $\Sigma$ is strictly locally convex in a neighbourhood of the boundary $\partial \Sigma$, then together with condition \eqref{eq5-20} in Theorem \ref{Theorem5-1}, we can prove that $\Sigma$ is strictly locally convex up to the boundary.
However, we did not find a suitable Condition I. Still, we proceed to prove the existence as if we have had Condition I in order to show how \eqref{eq5-20} and Condition I play the roles in the continuity process.

\vspace{4mm}

Now we prove the existence.  We use the geometric quantities in section 2 which are expressed in terms of $u$ and write equation \eqref{eq1-1} as
\begin{equation} \label{eqn17}
G( D^2 u, \,D u, u ) = \,F ( a_{ij} ) = \,f( \lambda ( a_{ij} ) ) =  \, \sigma_k^{1/k} (\kappa) = \,\psi^{1/k} ( x,\, u ).
\end{equation}
For convenience, denote
\[ G[u] = \, G (D^2 u, D u, u), \quad  G^{ij}[u] = G^{ij} (D^2 u, D u, u), \quad \mbox{etc.}\]
Let $\delta$ be a small positive constant such that
\begin{equation} \label{eq3-14}
G[\underline{u}] = \, G( D^2 \underline{u}, \,D \underline{u}, \,\underline{u} ) \,>  \delta \,\underline{u} \quad\mbox{in}\quad \Omega_{\epsilon}.
\end{equation}
For $t \in [0, 1]$, consider the following two auxiliary equations.
\begin{equation} \label{eq3-12}
\left\{ \begin{aligned} G (D^2 u, D u, u) \, =  & \, \Big(  ( 1 - t ) \frac{\underline{u}}{ G[\underline{u}]} + t \,\delta^{-1} \Big)^{-1}  \,u \quad\quad & \mbox{in} \quad \Omega_{\epsilon}, \\ u \,  = & \,\, \epsilon \quad \quad & \mbox{on} \quad \Gamma_{\epsilon}. \end{aligned} \right.
\end{equation}

\begin{equation} \label{eq3-13}
\left\{ \begin{aligned} G (D^2 u, D u, u) \, =  & \,\,\Big( ( 1 - t ) \,\delta^{-1} \,u^{-1}  +  t \, \psi^{- 1/k}(x, u) \Big)^{-1} \quad\quad & \mbox{in} \quad \Omega_{\epsilon}, \\ u \,  = & \,\, \epsilon \quad \quad & \mbox{on} \quad \Gamma_{\epsilon}. \end{aligned} \right.
\end{equation}

\begin{lemma} \label{Lemma6-1}
Let $\psi(x)$ be a positive function defined on $\overline{\Omega_{\epsilon}}$. For $x \in \overline{\Omega_{\epsilon}}$ and a positive $C^2$ function $u$ which is strictly locally convex near $x$, if
\[G [u] (x) =  F ( a_{ij}[u] )(x) = f (\kappa)(x) = \psi(x) \,u, \]
then
\[ G_u [u] (x) - \,\psi(x) \,  < 0. \]
\end{lemma}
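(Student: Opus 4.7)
The plan is a direct computation of $G_u$ at the point $x$, leveraging the homogeneity of $f = \sigma_k^{1/k}$. Unlike Lemma \ref{Lemma1}, which differentiates the version of $G$ expressed in the $v = u^2$ variable, here $G$ is regarded as a function of $D^2u$, $Du$, and $u$, so I would first recompute $G_u$ from scratch using the formula
\[ a_{ij}[u] = \frac{1}{w}\bigl(\delta_{ij} + u\,\gamma^{ik} u_{kl} \gamma^{lj}\bigr), \]
where $w = \sqrt{1+|Du|^2}$ and $\gamma^{ik} = \delta_{ik} - \frac{u_iu_k}{w(1+w)}$ depend only on $Du$, not on $u$ itself. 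Therefore, when differentiating with respect to $u$ holding $Du$ and $D^2u$ fixed, only the explicit factor $u$ in the second term of $a_{ij}$ contributes, giving
\[ \frac{\partial a_{ij}}{\partial u} = \frac{1}{w}\,\gamma^{ik} u_{kl}\gamma^{lj} = \frac{1}{u}\Bigl(a_{ij} - \frac{\delta_{ij}}{w}\Bigr). \]

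From this identity I get
\[ G_u = F^{ij}\,\frac{\partial a_{ij}}{\partial u} = \frac{1}{u}F^{ij}a_{ij} - \frac{1}{uw}\sum F^{ii}. \]
Now I use the fact that $f = \sigma_k^{1/k}$ is homogeneous of degree $1$ on $\Gamma_k$, so $F$ (viewed as a function of the symmetric matrix $\{a_{ij}\}$) is homogeneous of degree $1$ and Euler's identity yields $F^{ij}a_{ij} = F(\{a_{ij}\}) = f(\kappa)$. At the point $x$ the hypothesis $f(\kappa)(x) = \psi(x)\,u(x)$ gives $F^{ij}a_{ij}(x) = \psi(x)\,u(x)$.

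Substituting back,
\[ G_u[u](x) = \psi(x) - \frac{1}{u(x)\,w(x)}\sum_i f_i(\kappa(x)), \]
and hence
\[ G_u[u](x) - \psi(x) = -\frac{1}{u(x)\,w(x)}\sum_i f_i(\kappa(x)) < 0, \]
since $u, w > 0$ and each $f_i > 0$ on $\Gamma_k$ by the structure condition \eqref{eqn3}. There is no real obstacle here; the whole statement reduces to noticing that the coefficient matrix $\gamma$ and $w$ are $u$-independent and then applying the degree-one homogeneity of $\sigma_k^{1/k}$. The sign strictness in the conclusion comes from $\sum f_i > 0$, which is one of the fundamental cone conditions.
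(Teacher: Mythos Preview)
Your proof is correct and follows essentially the same route as the paper: both compute $G_u = \frac{1}{u}\bigl(\sum f_i\kappa_i - \frac{1}{w}\sum f_i\bigr)$ directly from $\partial a_{ij}/\partial u = \frac{1}{w}\gamma^{ik}u_{kl}\gamma^{lj}$. The only difference is in handling $\sum f_i\kappa_i$: you invoke the degree-one homogeneity of $f=\sigma_k^{1/k}$ (Euler's identity) to get the \emph{equality} $\sum f_i\kappa_i = f(\kappa) = \psi u$, whereas the paper uses concavity of $f$ together with $f(0)=0$ to get only the \emph{inequality} $\sum f_i\kappa_i \le f(\kappa)$. Your argument is slightly sharper for the specific $f$ at hand; the paper's version has the advantage of applying to any concave $f$ satisfying \eqref{eqn4}--\eqref{eqn5} without assuming homogeneity.
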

\begin{proof}
By direct calculation,
\begin{equation*}
G_u  = F^{ij} \frac{1}{w} \gamma^{ik} u_{k l} \gamma^{lj} = \frac{1}{u} \Big( \sum f_i \kappa_i - \frac{1}{w} \sum f_i \Big).
\end{equation*}
Since $ \sum f_i \kappa_i \leq \psi(x) \,u$ by the concavity of $f$ and $f(0) = 0$,
\[ G_u [ u ] (x) - \,\psi(x) \, \leq \,  - \frac{1}{w u}\, \sum f_i  < 0. \]
\end{proof}

\begin{lemma}  \label{Lemma6-2}
For any $t \in [0, 1]$, if $\underline{U}$ and $u$ are respectively any positive strictly locally convex subsolution and solution of \eqref{eq3-12}, then $u \geq \underline{U}$. In particular, the Dirichlet problem \eqref{eq3-12} has at most one strictly locally convex solution.
\end{lemma}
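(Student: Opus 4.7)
The plan is to prove the comparison $u\geq\underline{U}$ by contradiction, with the main engine being Lemma 6.1, which supplies the strict sign $G_u[u]-\psi<0$ at any solution point. First I would use the subsolution's boundary data $\underline{U}\leq\epsilon=u$ on $\Gamma_\epsilon$ to conclude that if $u-\underline{U}$ were negative anywhere then its minimum would be attained at an interior point $x_0\in\Omega_\epsilon$; standard critical-point information then gives $\underline{U}(x_0)>u(x_0)>0$, $D\underline{U}(x_0)=Du(x_0)$, and $D^2\underline{U}(x_0)\leq D^2u(x_0)$. Write $u_0=u(x_0)$, $U_0=\underline{U}(x_0)$ and freeze the arguments $(D^2u(x_0),Du(x_0))$ throughout what follows.

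The key device is the single-variable function
\[
\hat\Psi(z)\;=\;G\bigl(D^2u(x_0),\,Du(x_0),\,z\bigr)\;-\;\psi(x_0)\,z,\qquad z\in[u_0,U_0].
\]
The matrix $A(D^2u(x_0),Du(x_0),z)$ is affine in $z$, so its value at $z\in[u_0,U_0]$ is the convex combination $(1-s)A[u](x_0)+s\,A(D^2u(x_0),Du(x_0),U_0)$ with $s=(z-u_0)/(U_0-u_0)$. The first endpoint $A[u](x_0)$ is positive definite by strict local convexity of $u$, and the second satisfies $A(D^2u(x_0),Du(x_0),U_0)\geq A[\underline{U}](x_0)>0$ by monotonicity of the $A$-construction in its matrix slot, using $D^2u(x_0)\geq D^2\underline{U}(x_0)$ and $Du(x_0)=D\underline{U}(x_0)$. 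Hence the whole segment stays inside $\Gamma_n\subset\Gamma_k$, so $\hat\Psi$ is smoothly defined and concave in $z$ (composition of the concave $F$ with an affine map). The solution property and Lemma 6.1 then yield
\[
\hat\Psi(u_0)\;=\;G[u](x_0)-\psi(x_0)u_0\;=\;0,\qquad \hat\Psi'(u_0)\;=\;G_u[u](x_0)-\psi(x_0)\;<\;0,
\]
and concavity forces $\hat\Psi'(z)\leq\hat\Psi'(u_0)<0$ throughout $[u_0,U_0]$, so $\hat\Psi$ is strictly decreasing; in particular $\hat\Psi(U_0)<0$, i.e.\ $G(D^2u(x_0),Du(x_0),U_0)<\psi(x_0)\,U_0$.

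To close the argument I would invoke the ellipticity of $G$ in its matrix slot (positivity of $G^{st}$): combined with $D^2u(x_0)\geq D^2\underline{U}(x_0)$ and $Du(x_0)=D\underline{U}(x_0)$ it gives
\[
G\bigl(D^2u(x_0),Du(x_0),U_0\bigr)\;\geq\;G\bigl(D^2\underline{U}(x_0),D\underline{U}(x_0),\underline{U}(x_0)\bigr)\;=\;G[\underline{U}](x_0)\;\geq\;\psi(x_0)\,U_0,
\]
the final inequality being the subsolution property; this contradicts the previous display. The uniqueness statement is then immediate, since any two strictly locally convex solutions are mutually subsolutions. The only delicate point in the plan is the positivity verification that keeps the affine interpolation $z\mapsto A(D^2u(x_0),Du(x_0),z)$ inside $\Gamma_n$ so that $\hat\Psi$ is genuinely defined and concave on $[u_0,U_0]$; the convex-combination observation above handles this cleanly, and Lemma 6.1 then contributes exactly the strict sign required to drive the concavity comparison.
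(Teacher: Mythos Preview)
Your proof is correct, and it pursues a somewhat different route from the paper's. Both arguments set up the same interior-extremum contradiction and ultimately rely on Lemma~\ref{Lemma6-1} for the crucial strict sign, but the one-parameter deformations differ.

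The paper interpolates the \emph{full functions}: it sets $u[s]=s\,\underline{U}+(1-s)\,u$, verifies that $u[s]$ is strictly locally convex at $x_0$ for every $s\in[0,1]$, and considers $a(s)=G[u[s]](x_0)-\theta(x_0,t)\,u[s](x_0)$. From $a(0)=0$ and $a(1)\geq 0$ an intermediate-value argument produces $s_0\in[0,1]$ with $a(s_0)=0$ and $a'(s_0)\geq 0$; Lemma~\ref{Lemma6-1}, applied at the \emph{interpolated} state $u[s_0]$ (which is itself a solution at $x_0$ by virtue of $a(s_0)=0$), together with the critical-point information then forces $a'(s_0)<0$, a contradiction.

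Your version instead freezes the derivatives at $Du(x_0),\,D^2u(x_0)$ and moves only the zero-order slot $z$. This makes the dependence of $A$ on the parameter genuinely affine, so concavity of $F$ directly yields concavity of $\hat\Psi$; combined with $\hat\Psi(u_0)=0$ and $\hat\Psi'(u_0)<0$ (Lemma~\ref{Lemma6-1} applied \emph{once}, at $u$ itself) you obtain $\hat\Psi(U_0)<0$, and an ellipticity step against the subsolution inequality closes the contradiction. Your path avoids the intermediate-value step and needs Lemma~\ref{Lemma6-1} only at the known solution $u$, at the price of explicitly invoking the concavity of $F$; the paper's path does not use concavity of $F$ but must manufacture an auxiliary solution point $u[s_0]$ at which Lemma~\ref{Lemma6-1} can be re-applied. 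Both arguments are clean and of comparable strength.
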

\begin{proof}
We only need to prove that $u \geq \underline{U}$ in $\Omega_{\epsilon}$. If not, then $\underline{U} - u$ achieves a positive maximum at $x_0 \in \Omega_{\epsilon}$, at which,
\begin{equation} \label{eq3-15}
\underline{U}(x_0) > u(x_0),\quad D \underline{U}(x_0) = D u(x_0), \quad D^2\underline{U}(x_0) \leq D^2 u(x_0).
\end{equation}
Note that for any $s \in [0, 1]$, the deformation $u[s] := s \,\underline{U} + (1 - s)\, u$ is strictly locally convex near $x_0$. This is because at $x_0$,
\[ \begin{aligned}
   & \delta_{ij} \,+ \,u[s] \cdot\,{\gamma}^{ik} \big[ u[s] \big] \,\cdot ( u[s] )_{kl}\,\cdot \gamma^{lj}\big[ u[s] \big] \,
  \geq \, \delta_{ij} \,+ \,u[s] \, \,{\gamma}^{ik} [ \underline{U} ] \,\cdot \underline{U}_{kl}\,\cdot \gamma^{lj}[\underline{U}] \\
   = \,& (1 - s) \Big( 1 - \frac{u}{\underline{U}}\Big) \delta_{ij} + \frac{u[s]}{\underline{U}} \Big( \delta_{ij} + \underline{U} \cdot \gamma^{ik}[\underline{U}] \,\cdot \underline{U}_{kl}\,\cdot \gamma^{lj}[\underline{U}] \Big) > 0.
  \end{aligned}
\]
Denote
\begin{equation} \label{eq5-21}
\theta (x, t) = \Big(  ( 1 - t ) \frac{\underline{u}}{ G[\underline{u}]} + t \,\delta^{-1} \Big)^{-1}
\end{equation}
and define a differentiable function of $s \in [0, 1]$:
\[ a(s): = G \Big[ u[s] \Big] (x_0) \,-\, \theta (x_0, t)\,  \,u[s](x_0).  \]
Note that
\[ a(0) = G[u](x_0) \,-\, \, \theta (x_0, t)\,  \,u(x_0) \, = 0 \]
and
\[ a(1) = G [\underline{U}](x_0) \,-\,  \theta (x_0, t)\,  \,\underline{U} (x_0) \,  \geq 0.\]
Thus there exists $s_0 \in [0, 1]$ such that $a(s_0) = 0$ and $a'(s_0) \geq 0$, i.e.,
\begin{equation} \label{eq3-16}
 G\big[ u[s_0] \big] (x_0) \,=\, \theta (x_0, t)  \, u[s_0] (x_0)
\end{equation}
and
\begin{equation} \label{eq3-17}
\begin{aligned}
& G^{ij}\big[ u[s_0]  \big](x_0) \,\, D_{ij}  (\underline{U} - u)(x_0)
 + G^i \big[ u[s_0]  \big](x_0)\,\,  D_i  (\underline{U} - u)(x_0)
 \\ & +  \Big(G_u \big[ u[s_0] \big](x_0) - \theta (x_0, t)\, \Big)  (\underline{U} - u)(x_0) \geq 0.
\end{aligned}
\end{equation}
However, the above inequality can not hold by \eqref{eq3-15}, \eqref{eq3-16} and Lemma \ref{Lemma6-1}.
\end{proof}

\vspace{2mm}

\begin{thm} \label{Theorem6-1}
Under assumption \eqref{eqn12} and Condition I, for any $t \in [0, 1]$, the Dirichlet problem \eqref{eq3-12} has a unique strictly locally convex solution $u$, which satisfies $u \geq \underline{u}$ in $\Omega_{\epsilon}$.
\end{thm}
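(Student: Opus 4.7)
The plan is to run a continuity argument along the deformation \eqref{eq3-12}. First I check the base case $t=0$: the equation becomes $G[u] = (G[\underline{u}]/\underline{u})\,u$ with boundary value $\epsilon$, which is satisfied identically by $\underline{u}$ itself, since $\Gamma_\epsilon = \{\underline{u} = \epsilon\}$ and $\underline{u}$ is strictly locally convex by assumption. Moreover, a direct manipulation shows that $\underline{u}$ is a subsolution of \eqref{eq3-12} for every $t \in [0,1]$: the required inequality $G[\underline{u}] \geq \theta(x,t)\underline{u}$ is equivalent, after using \eqref{eq5-21}, to $G[\underline{u}] \geq \delta\,\underline{u}$, which is exactly \eqref{eq3-14}. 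Hence Lemma \ref{Lemma6-2} will automatically deliver both uniqueness and the comparison $u \geq \underline{u}$ once existence is established for each $t$.

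For openness, define
\[ T = \{ t \in [0,1] : \text{\eqref{eq3-12} admits a strictly locally convex solution } u_t \in C^{4,\alpha}(\overline{\Omega_\epsilon}) \} \]
and apply the implicit function theorem in Hölder spaces at $(t_0, u_{t_0})$. The Fréchet derivative in $u$ of the map $(t,u) \mapsto G[u] - \theta(x,t)u$ is the linear elliptic operator
\[ Lv = G^{ij}[u_{t_0}]\,v_{ij} + G^{i}[u_{t_0}]\,v_i + \bigl(G_u[u_{t_0}] - \theta(x,t_0)\bigr)\,v. \]
At the solution $u_{t_0}$ we have $G[u_{t_0}] = \theta(x,t_0)\,u_{t_0}$, so Lemma \ref{Lemma6-1} gives $G_u[u_{t_0}] - \theta(x,t_0) < 0$. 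The maximum principle then forces the Dirichlet kernel of $L$ to be trivial, so $L$ is an isomorphism from $\{v \in C^{4,\alpha} : v|_{\Gamma_\epsilon} = 0\}$ onto $C^{2,\alpha}$. The implicit function theorem produces a family of solutions $u_t$ for $t$ in a neighborhood of $t_0$, and strict local convexity is preserved under small $C^{4,\alpha}$-perturbations, so $T$ is open.

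Closedness is the substantive step. Given $t_k \in T$ with $t_k \to t_*$, the estimates of Sections 3 and 4 — the $C^0$ bound of Lemma \ref{Lemma0-2}, the $C^1$ bounds of Lemmas \ref{Lemma3}–\ref{Lemma1-2}, the second-order boundary bounds of Section 3.3, and the global curvature bound of Section 4 — all hold uniformly in $k$ because the right-hand side $\theta(x,t)u$ of \eqref{eq3-12} is smooth and uniformly controlled in $(x,u,t)$. Evans–Krylov and Schauder then promote this to a uniform $C^{4,\alpha}$ bound, and a subsequential limit $u_{t_*}$ solves \eqref{eq3-12} at $t_*$ and is locally convex on $\Omega_\epsilon$. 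The hard part is showing this limit is \emph{strictly} locally convex, since convexity could a priori degenerate. For this I invoke Proposition \ref{prop5-1}, which under the ``almost round'' assumption guarantees strict local convexity at some interior point, together with the constant rank theorem, Theorem \ref{Theorem5-1}, which spreads strict convexity to all of $\Omega_\epsilon$.

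The main obstacle in closedness, and what singles out hypothesis \eqref{eqn12} as essential, is verifying the structural condition \eqref{eq5-20} for the constant rank theorem applied to the prescribed curvature $\Psi(x,u) = (\theta(x,t)\,u)^k$ that appears in \eqref{eq3-12}. Assumption \eqref{eqn12} says precisely that $\underline{u}/G[\underline{u}]$ has non-negative Hessian in $x$, which guarantees that $\theta(x,t)^{-1} = (1-t)\,\underline{u}/G[\underline{u}] + t/\delta$ is convex in $x$ for every $t \in [0,1]$; tracking the derivatives of $\Psi$ and using the concavity of $\sigma_k^{1/k}$ along the lines of Section 5 then yields \eqref{eq5-20}, closing the argument. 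Once $T = [0,1]$ is established, uniqueness and the bound $u \geq \underline{u}$ follow from Lemma \ref{Lemma6-2}.
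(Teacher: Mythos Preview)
Your proposal is correct and follows essentially the same continuity-method argument as the paper: base case $t=0$ solved by $\underline{u}$, openness via the implicit function theorem and Lemma \ref{Lemma6-1}, closedness via the a priori estimates together with Proposition \ref{prop5-1} and the constant-rank Theorem \ref{Theorem5-1}, with \eqref{eqn12} supplying the convexity of $\theta^{-1}$ needed for \eqref{eq5-20}. One small correction: the verification of \eqref{eq5-20} in the paper is a direct computation using \eqref{eq5-23}--\eqref{eq5-22} that reduces the expression to $-k\,\Theta^{1+1/k}\sum_{i\in B}\sum_{\alpha,\beta}(\Theta^{-1/k})_{x_\alpha x_\beta}(x_\alpha)_i(x_\beta)_i\,u^k$, and does not invoke the concavity of $\sigma_k^{1/k}$.
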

\begin{proof}
Uniqueness is proved in Lemma \ref{Lemma6-2}. For existence of a strictly locally convex solution, we first verify that $\Psi = (\theta (x, t) \, u)^k = \Theta (x, t) \,u^k$ satisfies condition \eqref{eq5-20} in the constant rank theorem.
By direct calculation,
\[
\begin{aligned}
 & \Psi_{ii} - \frac{k+1}{k}\,\frac{\Psi_i^2}{\Psi}  +  k \,\Psi  \\
 = \,&  \sum\limits_{\alpha, \beta = 1}^n \,\Big( \Theta_{x_{\alpha} x_{\beta}} - \frac{k+1}{k} \,\frac{\Theta_{x_\alpha} \Theta_{x_\beta}}{\Theta} \Big) (x_{\alpha})_i (x_{\beta})_i \,u^k  + \sum\limits_{\alpha = 1}^n \Theta_{x_{\alpha}} (x_{\alpha})_{ii} \,u^k \\ & - 2 \sum\limits_{\alpha = 1}^n \Theta_{x_{\alpha}} (x_{\alpha})_i \, u^{k-1} u_i  - 2 k\, \Theta u^{k-2} u_i^2  + \Theta \, k\, u^{k-1} u_{ii} + k\, \Theta \,u^k.
\end{aligned}
\]
By \eqref{eqC-3}, \eqref{eq0-2}, \eqref{eq0-3} and \eqref{eq0-1}, for $i \in B$ and $\alpha = 1, \ldots, n$, we have
\begin{equation} \label{eq5-23}
\begin{aligned}
(x_{\alpha})_{ii} \sim & - \nu^{n+1} \,u \,\nu^{\alpha} + \frac{2}{u} (x_\alpha)_i \,u_i - \frac{1}{u} \sum\limits_{l = 1}^n u_l \,(x_{\alpha})_l \\
=  & - u \,(\nu \cdot \partial_{n+1})   (\nu \cdot \partial_{\alpha}) - u \sum\limits_{l = 1}^n \big(\frac{\tau_l}{u} \cdot \partial_{n+1}\big) \,\big(\frac{\tau_l}{u} \cdot \partial_{\alpha}\big) + \frac{2}{u} (x_\alpha)_i \,u_i
\\ = \,& \frac{2}{u} (x_\alpha)_i \,u_i
\end{aligned}
\end{equation}
and
\begin{equation} \label{eq5-22}
u_{ii} \,\sim \,  \frac{2}{u}  \,u_i^2 - u.
\end{equation}
Therefore by \eqref{eqn12},
\[
\sum\limits_{i \in B} \Big(\Psi_{ii} - \frac{k+1}{k}\,\frac{\Psi_i^2}{\Psi}  +  k \,\Psi \Big) \sim \, - k\, \Theta^{ \frac{1}{k} + 1}
\sum\limits_{i \in B}  \sum\limits_{\alpha, \beta = 1}^n \,\Big( \Theta^{- \frac{1}{k}} \Big)_{x_{\alpha} x_{\beta}} (x_{\alpha})_i (x_{\beta})_i \,u^k  \leq 0.
\]

Next, we use the standard continuity method to prove the existence. Note that $\underline{u}$ is a subsolution of \eqref{eq3-12} by \eqref{eq3-14}. We have obtained the $C^2$ bound for strictly locally convex solution $u$ (satisfying $u \geq \underline{u}$ by Lemma \ref{Lemma6-2}) of \eqref{eq3-12}, which implies the uniform ellipticity of equation \eqref{eq3-12}.
By Evans-Krylov theory \cite{Evans, Krylov}, we obtain the $C^{2, \alpha}$ estimate which is independent of $t$,
\begin{equation} \label{eq3-20}
\Vert u \Vert_{C^{2, \alpha} (\, \overline{ \Omega_{\epsilon} } \,)}  \leq C.
\end{equation}
Denote
\[ C_0^{2, \alpha} (\, \overline{ \Omega_{\epsilon} } \,) := \{ w \in C^{2, \alpha}( \, \overline{ \Omega_{\epsilon} } \, ) \,| \,w = 0 \,\, \mbox{on} \,\, \Gamma_{\epsilon} \}, \]

\[ \mathcal{U} := \left\{ w \in C_0^{2, \alpha} (\, \overline{ \Omega_{\epsilon} } \,) \,\Big| \, \underline{u} + w \,\,\mbox{is}\,\,\mbox{strictly}\,\,\mbox{locally}\,\,\mbox{convex}\,\,\mbox{in} \,\,\overline{\Omega_{\epsilon}}  \right\}. \]
We can see that $C_0^{2, \alpha} (\, \overline{ \Omega_{\epsilon} } \,)$ is a subspace of $C^{2, \alpha}( \,\overline{ \Omega_{\epsilon} }\, )$ and
$\mathcal{U}$ is an open subset of $C_0^{2, \alpha} (\,\overline{ \Omega_{\epsilon} }\,)$.
Consider the map $\mathcal{L}: \,\mathcal{U} \times [ 0, 1 ] \rightarrow C^{\alpha}(\, \overline{ \Omega_{\epsilon} } \,)$,
\[ \mathcal{L} ( w, t ) = G [ \underline{u} + w ] \, - \, \theta (x, t)  \,(\underline{u} + w).  \]
Set
\[ \mathcal{S} = \{ t \in [0, 1] \,|\, \mathcal{L}(w, t) = 0 \,\,\mbox{has}\,\,\mbox{a}\,\,\mbox{solution}\,\,w \,\,\mbox{in}\,\,\mathcal{U}\, \}. \]
Note that $\mathcal{S} \neq \emptyset$ since $\mathcal{L}(0, 0) = 0$.

We claim that $\mathcal{S}$ is open in $[0, 1]$. In fact, for any $t_0 \in \mathcal{S}$, there exists $w_0 \in \mathcal{U}$ such that $\mathcal{L} ( w_0, t_0 ) = 0$. The Fr\'echet derivative of $\mathcal{L}$ with respect to $w$ at $(w_0, t_0)$ is a linear elliptic operator from $C^{2, \alpha}_0 (\, \overline{\Omega_{\epsilon}} \,)$ to $C^{\alpha}(\, \overline{\Omega_{\epsilon}} \,)$,
\[
\mathcal{L}_w \big|_{(w_0, t_0)} ( h )  =  \,  G^{ij}[\underline{u} + w_0]\, D_{ij} h  +  G^i [ \underline{u} + w_0 ]\, D_i h  \\ + \Big(G_u [ \underline{u} + w_0] - \theta (x, t_0)  \Big) h.
\]
By Lemma \ref{Lemma6-1}, $\mathcal{L}_w \big|_{(w_0, t_0)}$ is invertible. By implicit function theorem, a neighborhood of $t_0$ is also contained in $\mathcal{S}$.

Next, we show that $\mathcal{S}$ is closed in $[0, 1]$. Let $t_i$ be a sequence in $\mathcal{S}$ converging to $t_0 \in [0, 1]$ and $w_i \in \mathcal{U}$ be the unique (by Lemma \ref{Lemma6-2}) solution corresponding to $t_i$, i.e. $\mathcal{L} (w_i, t_i) = 0$.  By Lemma \ref{Lemma6-2}, $w_i \geq 0$. By \eqref{eq3-20}, $u_i := \underline{u} + w_i$ is a bounded sequence in $C^{2, \alpha}(\,\overline{\Omega_{\epsilon}}\,)$, which possesses a subsequence converging to a locally convex solution $u_0$ of \eqref{eq3-12}. By Condition I and Theorem \ref{Theorem5-1}, we know that $u_0$ is strictly locally convex in $\overline{\Omega_{\epsilon}}$. Since $w_0 := u_0 - \underline{u} \in \mathcal{U}$ and $\mathcal{L}(w_0, t_0) = 0$, thus $t_0 \in \mathcal{S}$.
\end{proof}

From now on we may assume $\underline{u}$ is not a solution of \eqref{eqn10}, since otherwise we are done.

\begin{lemma} \label{Lemma6-3}
If $u \geq \underline{u}$ is a strictly locally convex solution of \eqref{eq3-13} in $\Omega_{\epsilon}$, then
$u > \underline{u}$ in $\Omega_{\epsilon}$ and $(u - \underline{u})_{\gamma} > 0$ on $\Gamma_{\epsilon}$.
\end{lemma}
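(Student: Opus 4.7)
My plan is to linearize along a path connecting $\underline u$ and $u$, derive a linear elliptic inequality for the difference, and then apply the classical strong maximum principle and Hopf boundary point lemma. It is natural to work in the variable $v = u^2$ from Section~3: equation \eqref{eq3-13} becomes $G(D^2 v, Dv, v) = \tilde h(x, v)$ with $\tilde h(x, v) := \bigl((1-t)\delta^{-1} v^{-1/2} + t\,\psi^{-1/k}(x, \sqrt v)\bigr)^{-1}$, and by Remark~\ref{rmk1}, strict local convexity of the graph is equivalent to positive definiteness of $\delta_{ij} + \frac{1}{2} v_{ij}$. Setting $\underline v := \underline u^2$, $w := v - \underline v$, and $v_s := \underline v + s\,w$ for $s \in [0,1]$, the matrix $\delta_{ij} + \frac{1}{2}(v_s)_{ij}$ is a convex combination of two positive definite matrices and hence positive definite, so $v_s$ is strictly locally convex uniformly in $s$, and the coefficient matrix $G^{ij}[v_s]$ is uniformly positive definite along the entire path.

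A short calculation using \eqref{eq3-14} (equivalent to $\delta^{-1} \underline v^{-1/2} > 1/G[\underline v]$) and \eqref{eqn8} (equivalent to $\psi^{-1/k}(x, \sqrt{\underline v}) \geq 1/G[\underline v]$) shows that $1/\tilde h(x, \underline v) \geq 1/G[\underline v]$, i.e., $G[\underline v] \geq \tilde h(x, \underline v)$ on $\Omega_\epsilon$, with strict inequality pointwise when $t < 1$ and at some point when $t = 1$ (using the standing assumption that $\underline u$ does not solve \eqref{eqn10}). Integrating $\tfrac{d}{ds} G[v_s]$ and $\tfrac{d}{ds} \tilde h(x, v_s)$ over $s \in [0,1]$, using $G[v] = \tilde h(x, v)$, and subtracting yields
\[
\overline L\,w \ := \ \overline a^{ij} w_{ij} + \overline b^i w_i + \overline c\,w \ = \ \tilde h(x, \underline v) - G[\underline v] \ \leq \ 0 \quad \text{in } \Omega_\epsilon,
\]
where $\overline a^{ij}$, $\overline b^i$, and $\overline c$ are the $s$-averages along the path of $G^{ij}$, $G^i$, and $G_v - \tilde h_v$ respectively. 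The operator $\overline L$ is uniformly elliptic with bounded coefficients, and $w \not\equiv 0$ (otherwise $\underline v$ would itself satisfy the equation, contradicting the strict subsolution property just established).

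To conclude, use the standard device of absorbing $(\overline c)^+ w$ into the right-hand side via $w \geq 0$, reducing $\overline L$ to an operator with nonpositive zeroth-order coefficient. The strong maximum principle then forces $w > 0$ in $\Omega_\epsilon$, and the Hopf boundary point lemma, applicable because $\Gamma_\epsilon$ is smooth and satisfies an interior ball condition, gives $w_\gamma > 0$ on $\Gamma_\epsilon$. Translating back to $u$, this gives $u - \underline u = w/(u + \underline u) > 0$ in $\Omega_\epsilon$ and $(u - \underline u)_\gamma = w_\gamma/(2\epsilon) > 0$ on $\Gamma_\epsilon$. The main delicate point is maintaining uniform ellipticity of $\overline L$ along the interpolation; this is precisely why the $v$-parametrization is preferable, since $\delta_{ij} + \frac{1}{2}(v_s)_{ij}$ is linear in $s$ whereas the analogous quantity $\delta_{ij} + (u_s)_i (u_s)_j + u_s (u_s)_{ij}$ in the $u$-variable is quadratic and does not obviously remain positive definite along a linear interpolation.
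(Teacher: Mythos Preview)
Your proposal is correct and follows essentially the same approach as the paper: both switch to the variable $v = u^2$, interpolate linearly between $\underline v$ and $v$ to obtain a uniformly elliptic linear inequality for the difference, and then invoke the strong maximum principle together with the Hopf boundary point lemma (the paper cites this combination as ``Lemma H'' from \cite{GNN}). Your write-up is in fact more explicit than the paper's---in particular, your verification of the subsolution inequality $G[\underline v] \geq \tilde h(x,\underline v)$ from \eqref{eq3-14} and \eqref{eqn8}, and your remark explaining why the $v$-parametrization preserves strict local convexity along the interpolation, spell out points the paper leaves implicit.
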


\begin{proof}
To keep the strict local convexity of the variations in our proof, we rewrite \eqref{eq3-13} in terms of $v$,
\begin{equation}  \label{eq3-18}
\left\{ \begin{aligned}
G(D^2 v, D v, v) = & \,\,\psi^t(x, v)  \quad \quad  &\mbox{in} \quad & \Omega_{\epsilon}, \\
v = & \,\,\epsilon^2  \quad \quad & \mbox{on} \quad & \Gamma_{\epsilon}. \end{aligned} \right.
\end{equation}
Since $\underline{u}$ is a subsolution but not a solution of \eqref{eq3-13}, equivalently, $\underline{v}$ is a subsolution but not a solution of \eqref{eq3-18}, thus,
\begin{equation} \label{eq3-19}
 G[\underline{v}] - G[v] \geq \psi^t(x, \underline{v}) - \psi^t(x, v).
\end{equation}
Denote $v[s] := s \,\underline{v} + (1 - s)\, v$, which is strictly locally convex over $\Omega_{\epsilon}$ for any $s \in [0, 1]$ since
\[  \delta_{ij} + \frac{1}{2} \big( v[s] \big)_{ij} = \,s \Big(  \delta_{ij} + \frac{1}{2}\, \underline{v}_{ij} \Big) + ( 1 - s ) \Big(  \delta_{ij} + \frac{1}{2}\, v_{ij} \Big) > 0 \quad \mbox{in} \quad \Omega_{\epsilon}. \]
From \eqref{eq3-19} we can deduce that
\[ a_{ij}(x) D_{ij}( \underline{v} - v ) + b_i(x) D_i(\underline{v} - v) + c(x) (\underline{v} - v) \geq 0 \quad \quad  \mbox{in} \quad  \Omega_{\epsilon}, \]
where
\[ a_{ij}(x) = \int_0^1 G^{ij}\big[ v[s] \big] (x) \,d s, \quad b_{i}(x) = \int_0^1 G^{i}\big[ v[s] \big] (x) \,d s,\]
\[ c(x) = \int_0^1 G_v \big[ v[s] \big] (x) - {\psi^t}_v (x, v[s] ) \,d s. \]
Applying the Maximum Principle and Lemma H (see p. 212 of \cite{GNN}) we conclude that $v > \underline{v}$ in $\Omega_{\epsilon}$ and $(v - \underline{v})_{\gamma} > 0$ on $\Gamma_{\epsilon}$. Hence the lemma is proved.
\end{proof}

\begin{thm} \label{Theorem6-2}
Under assumption \eqref{eqn12}, \eqref{eqn13} and Condition I, for any $t \in [0, 1]$, the Dirichlet problem \eqref{eq3-13} possesses a strictly locally convex solution satisfying $u \geq \underline{u}$ in $\Omega_{\epsilon}$. In particular, the Dirichlet problem \eqref{eqn10}  has a strictly locally convex solution $u^{\epsilon}$ satisfying $u^{\epsilon} \geq \underline{u}$ in $\Omega_{\epsilon}$.
\end{thm}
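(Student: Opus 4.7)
\medskip

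\noindent\textbf{Proof proposal.} The plan is to run a Leray--Schauder degree argument along the homotopy \eqref{eq3-13}, in the framework of \cite{Li89}. The key observation is that at $t=0$ the right-hand side of \eqref{eq3-13} reduces to $\delta\,u$, which is exactly the right-hand side of \eqref{eq3-12} at $t=1$. Hence Theorem \ref{Theorem6-1} already furnishes a unique strictly locally convex starting solution $u_0 \geq \underline{u}$; the task is to deform continuously to the actual equation at $t=1$.

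First I would establish $t$-uniform a priori estimates. For $t\in[0,1]$, the right-hand side of \eqref{eq3-13} is trapped between $\delta u$ and $\psi^{1/k}(x,u)$, so the $C^0$, $C^1$, and $C^2$ bounds of sections 3 and 4 apply uniformly, and Evans--Krylov then yields a $t$-independent constant $M$ with $\|u\|_{C^{2,\alpha}(\overline{\Omega_\epsilon})} \le M$. Next I would propagate strict local convexity along the homotopy. Proposition \ref{prop5-1} supplies at least one strictly locally convex point for any locally convex solution, and Theorem \ref{Theorem5-1} then spreads strict convexity to all of $\Omega_\epsilon$, provided the constant rank hypothesis \eqref{eq5-20} is checked for the $t$-interpolated right-hand side. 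At $t=0$ this is exactly the computation performed in Theorem \ref{Theorem6-1} using \eqref{eqn12}; at $t=1$ it is precisely the content of \eqref{eqn13}; for intermediate $t$ the reciprocal of the right-hand side is a convex combination of the two endpoint reciprocals, so the relevant matrix inequality is preserved by linearity.

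With these ingredients in place I would set
\[
\mathcal{U} = \{\, w \in C^{2,\alpha}_0(\overline{\Omega_\epsilon}) : \underline{u}+w \text{ is strictly locally convex in } \Omega_\epsilon\,\},
\]
and choose an open bounded $\mathcal{O}\subset\mathcal{U}$ cut out by $\|w\|_{C^{2,\alpha}} < M+1$, strict positivity $w>0$ in $\Omega_\epsilon$, and the Hopf condition $w_\gamma > 0$ on $\Gamma_\epsilon$. Lemma \ref{Lemma6-3} together with the a priori estimates and convexity preservation keeps every solution of \eqref{eq3-13} in the interior of $\mathcal{O}$ for all $t$. Define
\[
\mathcal{M}_t(w) = G[\underline{u}+w] - \Big((1-t)\delta^{-1}(\underline{u}+w)^{-1} + t\,\psi^{-1/k}(x,\underline{u}+w)\Big)^{-1}.
\]
At $t=0$ the unique zero is $w_0 = u_0-\underline{u}$, and its Fr\'echet linearization
\[
h \mapsto G^{ij}[u_0]\,D_{ij}h + G^i[u_0]\,D_i h + (G_u[u_0]-\delta)\,h
\]
has strictly negative zeroth-order coefficient by Lemma \ref{Lemma6-1}, hence is invertible with trivial kernel and has a well-defined Leray--Schauder index of $\pm 1$. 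By homotopy invariance, $\deg(\mathcal{M}_1,\mathcal{O},0) = \deg(\mathcal{M}_0,\mathcal{O},0) \neq 0$, which produces a strictly locally convex $u^\epsilon\geq\underline{u}$ solving \eqref{eqn10}.

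The principal obstacle I expect is the middle step: the clean transfer of the constant rank hypothesis \eqref{eq5-20} along the homotopy. The conditions \eqref{eqn12} and \eqref{eqn13} are designed for the two endpoints, but the verification for the interpolated right-hand side requires a careful check that the relevant Hessian inequality, in the $\lesssim$ sense of \cite{GM03} used in Theorem \ref{Theorem5-1}, is stable under the convex combination of reciprocals appearing in \eqref{eq3-13}. A secondary technical issue is guaranteeing that strict local convexity does not degenerate on $\partial\mathcal{O}$ during the deformation, so that the degree is defined throughout; this is exactly what the combination of Proposition \ref{prop5-1} and Theorem \ref{Theorem5-1} is tailored to rule out.
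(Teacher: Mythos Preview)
Your proposal is correct and follows essentially the same degree-theoretic route as the paper. Two small technical discrepancies are worth noting. First, the paper works in $C^{4,\alpha}_0(\overline{\Omega_\epsilon})$ rather than $C^{2,\alpha}_0$, bootstrapping via Schauder from the $C^{2,\alpha}$ estimate to a $C^{4,\alpha}$ bound \eqref{eq3-21}; this is because Theorem~\ref{Theorem5-1} requires a $C^4$ hypersurface, so the set $\mathcal{O}$ and the map $\mathcal{M}_t:\mathcal{O}\to C^{2,\alpha}$ are taken in the higher space. Your $C^{2,\alpha}$ setup can be made to work since solutions automatically regularize before Theorem~\ref{Theorem5-1} is invoked, but you should say so explicitly. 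Second, your sentence ``at $t=0$ this is exactly the computation performed in Theorem~\ref{Theorem6-1} using \eqref{eqn12}'' is slightly misattributed: at $t=0$ of \eqref{eq3-13} the right-hand side is $\delta u$, which corresponds to $t=1$ of \eqref{eq3-12} where the coefficient $\theta$ is constant and the constant-rank check is trivial (it does not use \eqref{eqn12}). The paper then handles intermediate $t$ exactly as you anticipate, writing $\Psi^{-1/k}=(1-t)\delta^{-1}u^{-1}+t\,\psi^{-1/k}$, observing via \eqref{eq5-22} that $(u^{-1})_{ii}-u^{-1}\sim 0$ for $i\in B$, and using \eqref{eqn13} with \eqref{eq5-23}--\eqref{eq5-22} for the $\psi^{-1/k}$ piece; so your ``principal obstacle'' is resolved precisely by the linearity you describe.
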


\begin{proof}
We first verify that
\[\Psi = \,\Big( ( 1 - t ) \,\delta^{-1} \,u^{-1}  +  t \, \psi^{- 1/k}(x, u) \Big)^{-k}\]
satisfies condition \eqref{eq5-20} in the constant rank theorem. In fact, by assumption \eqref{eqn13},  \eqref{eq5-23} and \eqref{eq5-22},
\[\begin{aligned}
& k \,\psi^{ \frac{1}{k} + 1} \sum\limits_{i \in B} \,\left( \big(\psi^{- \frac{1}{k}}\big)_{ii} - \psi^{- \frac{1}{k}} \right) \\
\sim \, & \sum\limits_{i \in B} \tau_i^{T}
 \left(
         \begin{array}{cc}
           \frac{k+1}{k} \frac{\psi_{x_\alpha} \psi_{x_\beta}}{\psi} - \psi_{x_{\alpha} x_{\beta}}  + \frac{u \psi_u - k \psi}{u^2} \delta_{\alpha\beta}
            & \frac{k+1}{k} \frac{\psi_{x_\alpha} \psi_{u}}{\psi} - \psi_{x_{\alpha} u} - \frac{\psi_{x_\alpha}}{u} \\
           \frac{k+1}{k} \frac{\psi_{x_\alpha} \psi_{u}}{\psi} - \psi_{x_{\alpha} u} - \frac{\psi_{x_\alpha}}{u} &
          \frac{k+1}{k} \frac{\psi_{u}^2}{\psi} - \psi_{u u} - \frac{k \,\psi}{u^2} - \frac{\psi_u}{u} \\
         \end{array}
       \right) \tau_i  \geq  0,
\end{aligned} \]
and consequently,
\[ \begin{aligned}
& \sum\limits_{i \in B} \Big(\Psi_{ii} - \frac{k+1}{k}\,\frac{\Psi_i^2}{\Psi}  +  k \,\Psi \Big)
\\ = \,&
- k\, \Psi^{\frac{ k + 1}{k}} \sum\limits_{i \in B} \left( (1 - t) \delta^{-1} \Big( (u^{-1})_{ii} - u^{-1} \Big) + t \Big( (\psi^{-1/k})_{ii} - \psi^{-1/k} \Big) \right)\,  \lesssim 0.
\end{aligned}
\]

We have established $C^{2, \alpha}$ estimates for strictly locally convex solutions $u \geq \underline{u}$ of \eqref{eq3-13}, which further imply $C^{4, \alpha}$ estimates by classical Schauder theory,
\begin{equation} \label{eq3-21}
\Vert u \Vert_{C^{4,\alpha}(\overline{\Omega_{\epsilon}})} < C_4.
\end{equation}
In addition, we have
\begin{equation} \label{eq3-23}
\mbox{dist}(\kappa[u], \partial\Gamma_k) > c_2 > 0 \quad \mbox{in} \,\, \overline{\Omega_{\epsilon}},
\end{equation}
where $C_4$, $c_2$ are independent of $t$. Denote
\[ C_0^{ 4, \alpha} (\,\overline{\Omega_{\epsilon}}\,) := \{ w \in C^{ 4, \alpha}( \,\overline{\Omega_{\epsilon}}\, ) \,| \,w = 0 \,\, \mbox{on} \,\, \Gamma_{\epsilon} \} \]
and
\[ \mathcal{O} := \left\{ w \in C_0^{4, \alpha} (\overline{\Omega_{\epsilon}}) \,\left\vert\,\begin{footnotesize}\begin{aligned} & w > 0 \,\,\mbox{in}\,\,\Omega_{\epsilon}, \quad  w_{\gamma} > 0 \,\,\mbox{on}\,\, \Gamma_{\epsilon},  \quad \Vert w {\Vert}_{C^{4,\alpha}(\overline{\Omega_{\epsilon}})} < C_4 + \Vert\underline{u}\Vert_{C^{4,\alpha}(\overline{\Omega_{\epsilon}})}\\ &  \{ \delta_{ij} + (\underline{u} + w)_i (\underline{u} + w)_j + (\underline{u} + w) (\underline{u} + w)_{ij} \} > 0   \,\, \mbox{in} \,\, \overline{\Omega_{\epsilon}}, \\
&  \mbox{dist}(\kappa[\underline{u} + w], \partial\Gamma_k) > c_2  \,\, \mbox{in} \,\, \overline{\Omega_{\epsilon}} \end{aligned}\end{footnotesize} \right.\right\}, \]
which is a bounded open subset of $C_0^{ 4, \alpha} (\,\overline{\Omega_{\epsilon}}\,)$.
Define
$\mathcal{M}_t (w):  \,\mathcal{O} \times [ 0, 1 ] \rightarrow C^{2,\alpha}(\overline{\Omega_{\epsilon}})$,
\[ \mathcal{M}_t (w) = G [ \underline{u} + w ] \, -\, \Big(( 1 - t ) \,\delta^{ - 1} \cdot (\underline{u} + w)^{-1}  + \, t \, \psi^{-1/k}(x, \underline{u} + w) \Big)^{-1}. \]
Let $u^0$ be the unique strictly locally convex solution of \eqref{eq3-12} at $t = 1$ (the existence and uniqueness are guaranteed by Theorem \ref{Theorem6-1} and Lemma \ref{Lemma6-2}). Observe that $u^0$ is also the unique solution of \eqref{eq3-13} when $t = 0$. By Lemma \ref{Lemma6-2},  $w^0:  = u^0 - \underline{u} \geq 0$ in $\Omega_{\epsilon}$. By Lemma \ref{Lemma6-3}, $w^0 > 0$ in $\Omega_{\epsilon}$ and ${w^0}_{\gamma} > 0$ on $\Gamma_{\epsilon}$. Also, $\underline{u} + w^0$ satisfies \eqref{eq3-21} and \eqref{eq3-23}. Thus, $w^0 \in \mathcal{O}$. By Condition I, Theorem \ref{Theorem5-1},
Lemma \ref{Lemma6-3}, \eqref{eq3-21} and \eqref{eq3-23}, $\mathcal{M}_t(w) = 0$ has no solution on $\partial\mathcal{O}$ for any $t \in [0, 1]$. Besides, $\mathcal{M}_t$ is uniformly elliptic on $\mathcal{O}$ independent of $t$. Therefore, we can define the $t$-independent degree of $\mathcal{M}_t$ on $\mathcal{O}$ at $0$:
\[ \deg (\mathcal{M}_t, \mathcal{O}, 0).  \]
To find this degree, we only need to compute
$ \deg (\mathcal{M}_0, \mathcal{O}, 0) $.
By the above discussion, we know that $\mathcal{M}_0 ( w ) = 0$ has a unique solution $w^0 \in \mathcal{O}$. The Fr\'echet derivative of $\mathcal{M}_0$ with respect to $w$ at $w^0$ is a linear elliptic operator from $C^{4, \alpha}_0 (\overline{\Omega_{\epsilon}})$ to $C^{2, \alpha}(\overline{\Omega_{\epsilon}})$,
\begin{equation}
\mathcal{M}_{0,w} |_{w^0} ( h )  =  \,  G^{ij}[ u^0 ]\, D_{ij} h  + G^i [ u^0 ] \,D_i h  \, + ( G_u [ u^0 ]  - \,\delta \, ) h.
\end{equation}
By Lemma \ref{Lemma6-1}, $G_u [ u^0 ]  - \,\delta \, < 0$ in $\overline{\Omega_{\epsilon}}$ and thus $\mathcal{M}_{0,w} |_{w^0}$ is invertible. By the degree theory established in \cite{Li89},
\[ \deg (\mathcal{M}_0, \mathcal{O}, 0) = \deg( \mathcal{M}_{0, w^0}, B_1, 0 ) = \pm 1 \neq 0, \]
where $B_1$ is the unit ball in $C_0^{4,\alpha}(\overline{\Omega_{\epsilon}})$. Thus
$\deg(\mathcal{M}_t, \mathcal{O}, 0) \neq 0$ for all $t \in [0, 1]$, which implies that the Dirichlet problem \eqref{eq3-13} has at least one strictly locally convex solution $u \geq \underline{u}$ for any $t \in [0, 1]$.
\end{proof}

\vspace{4mm}

\section{\large Interior second order estimates for prescribed scalar curvature equations in $\mathbb{H}^{n+1}$}

\vspace{4mm}

Let $u^{\epsilon} \geq \underline{u}$ be a strictly locally convex solution over $\Omega_{\epsilon}$ to the Dirichlet problem \eqref{eqn10}. For any fixed $ \epsilon_0 > 0$, we want to establish the uniform $C^2$ estimates for $u^{\epsilon}$ for any $0 < \epsilon < \frac{\epsilon_0}{4}$ on $\overline{\Omega_{\epsilon_0}}$,  namely,
\begin{equation} \label{eq4-1}
\Vert u^{\epsilon} \Vert_{C^2(\,\overline{\Omega_{\epsilon_0}}\,)} \leq C, \quad\quad\quad\forall \quad 0 < \epsilon < \frac{\epsilon_0}{4}.
\end{equation}
In what follows, let $C$ be a positive constant which is independent of $\epsilon$ but depends on $\epsilon_0$.
By \eqref{eqn15}, we immediately obtain the uniform $C^0$ estimate:
\begin{equation} \label{eq4-2}
 \epsilon_0 \,\leq \,  u^{\epsilon} \, \leq \, C \quad \mbox{on} \quad \overline{\Omega_{\epsilon_0}}, \quad\quad\quad\forall \quad 0 < \epsilon < \epsilon_0.
\end{equation}

For uniform $C^1$ estimate on $\overline{\Omega_{\epsilon_0}}$, we make use of the Euclidean strict local convexity of $(u^\epsilon)^2 + |x|^2$ (see \cite{Tru84} for a similar idea) to obtain
\[
\max\limits_{\overline{\Omega_{\epsilon_0}}} \big\vert D \big((u^\epsilon)^2 + |x|^2\big) \big\vert \leq \frac{C (n) \max\limits_{\overline{\Omega_{\epsilon_0 / 2}}} \big( (u^\epsilon)^2 + |x|^2 \big)}{\mbox{dist} (\Gamma_{\epsilon_0 / 2}, \overline{\Omega_{\epsilon_0}})}, \quad\quad\quad\forall \quad 0 < \epsilon < \frac{\epsilon_0}{2}.
\]
It follows that,
\begin{equation} \label{eq4-3}
 \Vert u^{\epsilon} \Vert_{C^1 (\,\overline{\Omega_{\epsilon_0}})} \,\leq \, C, \quad\quad\quad\forall \quad 0 < \epsilon < \frac{\epsilon_0}{2}.
\end{equation}

We are now in a position to prove
\begin{equation} \label{eq4-10}
 \big| D^2 u^{\epsilon} \big|\,\leq \, C \quad \mbox{on} \quad \overline{\Omega_{\epsilon_0}}, \quad\quad\quad\forall \quad 0 < \epsilon < \frac{\epsilon_0}{4},
\end{equation}
which is equivalent to
\begin{equation} \label{eq4-11}
\max\limits_{\overline{\Omega_{\epsilon_0}}} \big| \kappa_i [ u^{\epsilon}] \big|\,\leq \, C, \quad\quad\quad\forall \quad 0 < \epsilon < \frac{\epsilon_0}{4}.
\end{equation}

Choose $r = \mbox{dist}(\overline{\Omega_{\epsilon_0}}, \Gamma_{\epsilon_0 / 2})$, and cover $\overline{\Omega_{\epsilon_0}}$ by finitely many open balls $B_{\frac{r}{2}}$ with radius $\frac{r}{2}$ and centered in $\Omega_{\epsilon_0}$. Note that the number of such open balls depends on $\epsilon_0$. In addition, the corresponding balls $B_r$ are all contained in $\Omega_{\epsilon_0 / 2}$, over which, we are able to apply the gradient estimate due to \eqref{eq4-3}:
\[\Vert u^{\epsilon} \Vert_{C^1 (\,\overline{\Omega_{\epsilon_0 / 2}})} \,\leq \, C, \quad\quad\quad\forall \quad 0 < \epsilon < \frac{\epsilon_0}{4}.
\]
If we are able to establish the following interior $C^2$ estimate on each $B_r$:
\[ \sup\limits_{B_{r/2}} \, \big| \kappa_i [u^{\epsilon}] \big| \,\leq \, C (\Vert u^{\epsilon} \Vert_{C^1 (B_r)} ), \quad\quad\quad\forall \quad 0 < \epsilon < \frac{\epsilon_0}{4}, \]
then \eqref{eq4-11} can be proved. Since the principal curvatures $\kappa_i [u^{\epsilon}]$, $i = 1, \ldots, n$ and the gradient $D u^{\epsilon}$ are invariant under the change of Euclidean coordinate system, we may assume the center of $B_r$ is $0$. For convenience, we also omit the superscript in $u^{\epsilon}$ and write as $u$.

In what follows, we will use Guan-Qiu's idea \cite{GQ17} to derive the interior $C^2$ estimate
\begin{equation} \label{eqn18}
\sup\limits_{B_{r/2}} \, | \kappa_i (x) | \,\leq \, C
\end{equation}
for strictly locally convex hypersurface $\Sigma$ in $\mathbb{H}^{n+1}$ to the following equation
\begin{equation} \label{eq4-4}
\sigma_2 ( \kappa ) = \,\psi ( {\bf x} ),
\end{equation}
where $B_r \subset \mathbb{R}^n$ is the open ball with radius $r$ centered at $0$ and $C$ is a positive constant depending only on $n$, $r$, $\Vert \Sigma \Vert_{C^1( B_r )}$, $\Vert \psi \Vert_{C^2(B_r)}$ and $\inf_{B_r} \psi$.

For $x \in B_r$ and $\xi \in \mathbb{S}^{n-1} \cap T_{(x, u)} \Sigma$, consider the test function
\begin{equation*}
\Theta (x, u, \xi) =  \, 2 \ln \rho(x) + \alpha \Big(\frac{u}{\nu^{n+1}} \Big)^2 - \beta \left(\frac{{\bf x} \cdot \nu}{{\nu}^{n+1}}\right) +  \ln \ln  h_{\xi \xi},
\end{equation*}
where $\rho(x) = r^2 - |x|^2$ with $|x|^2 = \sum_{i = 1}^n x_i^2$ and $\alpha$, $\beta$ are positive constants to be determined later. At this point, we remind the readers that $\cdot$ means the inner product in $\mathbb{R}^{n+1}$ while $\langle\,\, , \,\, \rangle$ represents the inner product in $\mathbb{H}^{n+1}$.

The maximum value of $\Theta$ can be attained in an interior point $x^0 = (x_1, \ldots, x_n) \in B_r$. Let $\tau_1, \ldots, \tau_n$ be a normal coordinate frame around $(x^0, u(x^0))$ on $\Sigma$ and assume the direction obtaining the maximum to be $\xi = {\tau}_1$. By rotation of $\tau_2, \ldots, \tau_n$ we may assume that $\big( h_{ij}(x^0) \big)$ is diagonal. Thus, the function
\[   2 \ln \rho(x) + \alpha \Big(\frac{u}{\nu^{n+1}} \Big)^2 - \beta \,\Big( \frac{{\bf x} \cdot \nu}{{\nu}^{n+1}}  \Big)  +  \ln \ln  h_{11}    \]
also achieves its maximum at $x^0$. Therefore, at $x^0$,

\begin{equation} \label{eq4-6}
\frac{2 \,{\rho}_i}{\rho} + 2 \alpha \frac{u}{\nu^{n + 1}} \Big( \frac{u}{\nu^{n+1}} \Big)_i - \beta  \left(\frac{{\bf x} \cdot \nu}{  {\nu}^{n+1}}\right)_i + \frac{h_{11i}}{h_{11}\, \ln h_{11}} = 0,
\end{equation}

\vspace{2mm}

\begin{equation} \label{eq4-7}
\begin{aligned}
&  \frac{2 \sigma_2^{ii} \rho_{ii}}{\rho} - \frac{2 \sigma_2^{ii} \rho_i^2}{\rho^2}  +  2 \alpha \sigma_2^{ii} \left( \Big( \frac{u}{\nu^{n+1}}  \Big)_i^2 + \Big( \frac{u}{\nu^{n+1}} \Big) \Big( \frac{u}{\nu^{n+1}} \Big)_{ii} \right)
\\ & - \beta \sigma_2^{ii} \left(\frac{{\bf x} \cdot \nu}{{\nu}^{n+1}}\right)_{ii} + \frac{\sigma_2^{ii} h_{11ii}}{h_{11} \ln h_{11}} - (1 + \ln h_{11}) \frac{\sigma_2^{ii} h_{11i}^2}{(h_{11} \ln h_{11})^2} \,\,\leq 0.
\end{aligned}
\end{equation}

To compute the quantities in \eqref{eq4-6} and \eqref{eq4-7}, we first convert them into quantities in $\mathbb{H}^{n+1}$, and apply
the Gauss formula and Weingarten formula
\[ {\bf D}_{\tau_i} \tau_j = \nabla_{\tau_i} \tau_j + h_{ij}\, {\bf n}, \]
\[ {\bf n}_i = - h_{ij} \,\tau_j. \]
We also note that in $\mathbb{H}^{n+1}$,
\[ {\bf D}_{\bf y} \,\partial_{n+1} = \, - \frac{1}{u} \,{\bf y}, \]
where ${\bf y}$ is any vector field in $\mathbb{H}^{n+1}$. This implies that $\partial_{n+1}$ is a conformal Killing field in $\mathbb{H}^{n+1}$.
By straightforward calculation, we obtain
\begin{equation} \label{eq5-12}
\Big( \frac{u}{\nu^{n+1}} \Big)_i \,  = \, \Big( \frac{1}{\langle {\bf n}, \,\partial_{n+1} \rangle} \Big)_i  =  \,\kappa_i \,\frac{\tau_i \cdot \partial_{n+1}}{(\nu^{n+1})^2},
\end{equation}

\begin{equation} \label{eq5-13}
\Big( \frac{u}{\nu^{n+1}} \Big)_{ii} = h_{iij} \frac{\tau_j \cdot \partial_{n+1}}{ (\nu^{n+1})^2} + \kappa_i^2 \frac{u}{\nu^{n+1}} - \frac{u}{(\nu^{n+1})^2} \,\kappa_i  + 2 \kappa_i^2 \frac{(\tau_i \cdot \partial_{n+1})^2}{u (\nu^{n+1})^3 }.
\end{equation}

Now we choose the conformal Killing field ${\bf x}$ in $\mathbb{H}^{n+1}$ to be
\[ {\bf x} = x_{n+1} \,\sum_{i = 1}^n \, x_i \partial_i + \frac{1}{2} \,\Big( x_{n+1}^2 - |x|^2  \Big) \,\partial_{n+1}.  \]
We can verify that
\[ {\bf D}_{\bf y} \, {\bf x} \,=\,\phi \,\,{\bf y}, \quad \quad \quad \phi =  \frac{x_{n+1}^2 + |x|^2}{2\, x_{n+1}}, \]
where ${\bf y}$ is any vector field in $\mathbb{H}^{n+1}$.

Again, by straightforward calculation, we find that
\begin{equation} \label{eq5-8}
\left(\frac{{\bf x} \cdot \nu}{{\nu}^{n+1}}\right)_i = \frac{\kappa_i}{u \, \nu^{n + 1}} \,\left( \frac{ ({\bf x} \cdot \nu) \, (\tau_i \cdot \partial_{n+1} )}{\nu^{n+1}}  - {\bf x} \cdot \tau_i  \right),
\end{equation}

\begin{equation} \label{eq5-9}
\begin{aligned}
\left( \frac{{\bf x \cdot \nu}}{\nu^{ n + 1}} \right)_{ii} = \,&\, - \Big( \frac{ \phi\, u}{\nu^{n + 1}} + \frac{{\bf x} \cdot \nu}{(\nu^{n+1})^2} \Big) \kappa_i + \frac{2 \kappa_i (\tau_i \cdot \partial_{n + 1})}{u \nu^{n+1}} \Big( \frac{{\bf x} \cdot \nu}{\nu^{n+1}} \Big)_i \\
& + \frac{1}{u (\nu^{n+1})^2} \Big( ({\bf x} \cdot \nu) (\tau_j \cdot \partial_{n+1}) - ({\bf x} \cdot \tau_j) \nu^{n + 1}\Big) h_{iij}.
\end{aligned}
\end{equation}

Also, since
\[ |x|^2 \,=\, \frac{1 - 2 \langle {\bf x}, \partial_{n+1} \rangle}{\langle \partial_{n+1}, \partial_{n+1} \rangle}, \]
by direct calculation we obtain
\begin{equation} \label{eq5-10}
\begin{aligned}
 \rho_i =  \,& 2 u^3 \langle \tau_i, \partial_{n+1} \rangle \langle {\bf x}, \partial_{n+1} \rangle -  2 u \langle {\bf x}, \tau_i \rangle
 \\ = \, &  \frac{2}{u} \Big( ( \tau_i \cdot \partial_{n+1}) ( {\bf x} \cdot \partial_{n+1} ) -   {\bf x} \cdot \tau_i  \Big),
 \end{aligned}
\end{equation}

\begin{equation} \label{eq5-11}
\begin{aligned}
\rho_{ii} = \,& \kappa_i \Big( (u^2 - |x|^2) \nu^{n+1} - 2 {\bf x} \cdot \nu \Big)
\\ \, & + \frac{4 u^2 - 2 |x|^2 }{u^2} (\tau_i \cdot \partial_{n+1})^2 - \frac{4}{u^2} (\tau_i \cdot {\bf x})(\tau_i \cdot \partial_{n+1}) - 2 u^2.
\end{aligned}
\end{equation}

Differentiate \eqref{eq4-4} twice,
\begin{equation} \label{eq4-8}
\sigma_2^{ii} h_{iik} \, = \,\psi_k,
\end{equation}
\begin{equation}  \label{eq4-9}
\sum\limits_{i \neq j} h_{ii1} h_{jj1} - \sum\limits_{ i \neq j} h_{i j 1}^2 + \sigma_2^{ii} h_{ii11} = \,\, \psi_{11} \,\geq \, - C \kappa_1.
\end{equation}

Now taking \eqref{eq5-11}, \eqref{eq5-12}, \eqref{eq5-13}, \eqref{eq5-9}, \eqref{eq4-6}, \eqref{eq4-8}, \eqref{eq2G-4}, \eqref{eq4-9} into \eqref{eq4-7}, we obtain
\begin{equation} \label{eq5-14}
\begin{aligned}
- \frac{C}{\rho} \,\sigma_1 - C \alpha - C \beta - \frac{2 \sigma_2^{ii} \rho_i^2}{\rho^2}  + 2 \alpha \frac{u^2}{(\nu^{n+1})^2} \sigma_2^{ii} \kappa_i^2 - \frac{2 \sigma_2^{ii} \kappa_i (\tau_i \cdot \partial_{n+1}) \,h_{11i}}{u \,\nu^{n+1} \kappa_1 \ln \kappa_1}
\\ + \frac{\sum_{i \neq j} h_{ij1}^2 - \sum_{i \neq j} h_{ii1} h_{jj1}}{\kappa_1 \,\ln \kappa_1} - \frac{C \sigma_1}{\ln \kappa_1} - \frac{\sigma_2^{ii} \kappa_i^2}{\ln \kappa_1} - \big( 1 + \ln \kappa_1 \big) \frac{\sigma_2^{ii} h_{11i}^2}{(\kappa_1 \ln \kappa_1)^2} \, \leq 0.
\end{aligned}
\end{equation}
By Theorem 1.2 of \cite{Chen13} (see also Lemma 2 of \cite{GQ17}), we have
\[ - \sum_{i \neq j} h_{ii1} h_{jj1} \geq \, \frac{1}{2 \sigma_2} \,\frac{(n-1) \big(2 \sigma_2 \,h_{111} - \kappa_1 \,\psi_1 \big)^2}{(n - 1) \kappa_1^2 + 2 (n - 2) \sigma_2} - \frac{\psi_1^2}{2 \sigma_2}.\]
Also,
\[ - \frac{2 \sigma_2^{ii} \kappa_i (\tau_i \cdot \partial_{n+1}) \,h_{11i}}{u \,\nu^{n+1} \kappa_1 \ln \kappa_1} \geq \, - \frac{u^2}{(\nu^{n+1})^2} \sigma_2^{ii} \kappa_i^2  - \frac{(\tau_i \cdot \partial_{n+1})^2}{u^4} \, \frac{\sigma_2^{ii} h_{11i}^2}{(\kappa_1 \ln \kappa_1)^2}. \]
Thus, when $\kappa_1$ is sufficiently large,
\eqref{eq5-14} reduces to
\begin{equation} \label{eq5-15}
- \frac{C}{\rho} \sigma_1 - \frac{2 \,\sigma_2^{ii} \,\rho_i^2}{\rho^2} + (2 \alpha - 2) \frac{u^2}{(\nu^{n+1})^2} \sigma_2^{ii} \kappa_i^2 + \frac{\sigma_2^{ii}\, h_{11i}^2}{20 \,\kappa_1^2 \,\ln \kappa_1} \, \leq 0.
\end{equation}

As in \cite{GQ17}, we divide our discussion into three cases. We show all the details to indicate the tiny differences due to the outer space $\mathbb{H}^{n+1}$.

\vspace{4mm}

Case (i):  when $|x|^2  \leq \frac{r^2}{2}$,  we have $\frac{1}{\rho} \leq \frac{2}{r^2}$. Then \eqref{eq5-15} reduces to
\[ - C \sigma_1 + ( 2 \alpha - 2 ) \frac{u^2}{(\nu^{n+1})^2} (\sigma_2 \sigma_1 - 3 \sigma_3) \,\leq 0. \]
Choosing $\alpha$ sufficiently large we obtain an upper bound for $\kappa_1$.

\vspace{4mm}

Next, we consider the cases when $|x|^2 \geq \frac{r^2}{2}$, which implies $\rho \leq \frac{r^2}{2}$. We observe that
\begin{equation} \label{eq5-17}
\rho_i = - \frac{2}{u} \Big(   {\bf x} - ( {\bf x} \cdot \partial_{n+1}) \, \partial_{n+1} \Big)  \cdot \tau_i = \,- \frac{2}{u}  \sum\limits_{j = 1}^n ( {\bf x} \cdot \partial_{j}) \, (\partial_{j} \cdot \tau_i).
\end{equation}
Therefore,
\begin{equation} \label{eq5-16}
\begin{aligned}
\sum\limits_i \rho_i^2 = \,\,& \frac{4}{u^2} \sum\limits_{jk} ({\bf x} \cdot \partial_j) ({\bf x} \cdot \partial_k) \sum\limits_i (\partial_j \cdot \tau_i) (\partial_k \cdot \tau_i)
\\ = \,\,& 4 \sum\limits_{jk} ({\bf x} \cdot \partial_j) ({\bf x} \cdot \partial_k) \Big( \sum\limits_i \big(\partial_j \cdot \frac{\tau_i}{u} \big) \frac{\tau_i}{u} \Big) \cdot \partial_k
\\ = \,\,& 4 \sum\limits_{jk} ({\bf x} \cdot \partial_j) ({\bf x} \cdot \partial_k) \Big( \partial_j - (\partial_j \cdot \nu) \nu \Big) \cdot \partial_k
\\ \geq \,\,& 4 \Big( \sum\limits_j ({\bf x} \cdot \partial_j)^2 - \sum\limits_j ({\bf x} \cdot \partial_j)^2 \, \sum\limits_j (\partial_j \cdot \nu)^2 \Big)
\\ = \,\,& 4 \sum\limits_j ({\bf x} \cdot \partial_j)^2 (\nu^{n+1})^2 = \,4 u^2 |x|^2 (\nu^{n+1})^2 \geq \, 2\, r^2 u^2 (\nu^{n+1})^2.
\end{aligned}
\end{equation}

\vspace{3mm}

Case (ii):  if for some $ 2 \leq j \leq n$,  we have $|\rho_j| > d$, where $d$ is a small positive constant to be determined later.

By \eqref{eq4-6}, \eqref{eq5-12} and \eqref{eq5-8}, we have

\[ \begin{aligned}
\frac{h_{11j}}{\kappa_1 \,\ln \kappa_1} = - \frac{2 \,\rho_j}{\rho} + \Big( \beta \frac{ ({\bf x} \cdot \nu) (\tau_j \cdot \partial_{n+1}) - ({\bf x} \cdot \tau_j) \, \nu^{n+1} }{u(\nu^{n+1})^2} - 2 \alpha \frac{u (\tau_j \cdot \partial_{n+1})}{(\nu^{n+1})^3} \Big) \kappa_j.
\end{aligned} \]
It follows that
\[ \frac{h_{11j}^2}{\kappa_1^2 \,(\ln \kappa_1)^2} \geq \, \frac{2 \, \rho_j^2}{\rho^2} - C (\alpha + \beta)^2 \,\kappa_j^2 \geq \, \frac{ d^2}{\rho^2} + \frac{4 \, d^2}{r^4} - \frac{ C (\alpha + \beta)^2 }{\kappa_1^2} \geq \, \frac{ d^2}{\rho^2}\]
when $\kappa_1$ is sufficiently large.
Consequently, \eqref{eq5-15} reduces to
\[ - \frac{C \,\sigma_1}{\rho^2} + \frac{d^2}{20 \,\rho^2} \, \sigma_2^{jj} \,\ln \kappa_1 \, \leq 0. \]
Since $\sigma_2^{jj} \geq \frac{9}{10} \,\sigma_1$ when $\kappa_1$ is sufficiently large, we obtain an upper bound for $\kappa_1$.

\vspace{4mm}

Case (iii):  if $|\rho_j| \leq d$ for all $ 2 \leq j \leq n$, from \eqref{eq5-16} we can deduce that $|\rho_1| \geq c_0 > 0$.
By \eqref{eq4-6}, \eqref{eq5-12} and \eqref{eq5-8}, we have
\begin{equation} \label{eq5-18}
\frac{h_{111}}{\kappa_1\, \ln \kappa_1} = \, \frac{ \beta \kappa_1 \,b_1}{ (\nu^{n + 1})^2} \,  - \frac{2 \,{\rho}_1}{\rho} - \frac{2 \alpha u \kappa_1 (\tau_1 \cdot \partial_{n+1}) }{(\nu^{n+1})^3},
\end{equation}
where
\[ \begin{aligned}
b_1 =  \,\,& ({\bf x} \cdot \nu) \, \Big(\frac{\tau_1}{u} \cdot \partial_{n+1} \Big)  - \Big({\bf x} \cdot \frac{\tau_1}{u} \Big) \,\nu^{n+1}
\\ = \,\,&  \frac{\nu^{n+1}}{2} \,\rho_1 + \Big( \frac{\tau_1}{u} \cdot \partial_{n+1} \Big) \Big( {\bf x} \cdot \big( \nu - (\nu \cdot \partial_{n+1}) \partial_{n+1}  \big) \Big)
\\ = \,\,& \frac{\nu^{n+1}}{2} \,\rho_1 + \frac{1}{\nu^{n+1}} \Big( \frac{\tau_1}{u} \cdot \partial_{n+1} \Big) (\nu \cdot \partial_{n+1})  \sum\limits_i (\nu \cdot \partial_{i}) ( {\bf x} \cdot \partial_{i} )
\\ = \,\,& \frac{\nu^{n+1}}{2} \,\rho_1 + \frac{1}{\nu^{n+1}} \sum\limits_i \Big( \big( \frac{\tau_1}{u} \cdot \partial_{n+1} \big) \partial_{n+1} \Big) \cdot \Big( (\partial_i \cdot \nu) \nu \Big)( {\bf x} \cdot \partial_{i} )
\\ = \,\,& \frac{\nu^{n+1}}{2} \,\rho_1 + \frac{1}{\nu^{n+1}} \sum\limits_i \Big( \frac{\tau_1}{u} - \sum\limits_j \big(\frac{\tau_1}{u} \cdot \partial_{j} \big) \partial_{j} \Big) \cdot \Big( \partial_i - \sum\limits_k \big(\partial_i \cdot \frac{\tau_k}{u}\big) \frac{\tau_k}{u} \Big)( {\bf x} \cdot \partial_{i} )
\\ = \,\,& \frac{\nu^{n+1}}{2} \,\rho_1 + \frac{1}{\nu^{n+1}} \sum\limits_i \Big( - \frac{\tau_1}{u} \cdot \partial_i + \sum\limits_{jk} \big(\frac{\tau_1}{u} \cdot \partial_j \big) \big( \partial_i \cdot \frac{\tau_k}{u} \big) \big( \partial_j \cdot \frac{\tau_k}{u} \big) \Big)( {\bf x} \cdot \partial_{i} )
\\ = \,\,& \frac{\nu^{n+1}\,\rho_1}{2} + \frac{\rho_1 }{2 \,\nu^{n+1}} -  \frac{1}{2 \,\nu^{n+1}}    \sum\limits_{jk} \big(\frac{\tau_1}{u} \cdot \partial_j \big)  \big( \partial_j \cdot \frac{\tau_k}{u} \big)  \,\rho_k.
\end{aligned}\]
Note that in the last equality we have applied \eqref{eq5-17}.
Hence
\[ |b_1| \geq  \frac{\nu^{n+1}}{2} \,|\rho_1|  - \frac{1}{2 \,\nu^{n+1}} \sum\limits_{k \neq 1} |\rho_k| \,\geq c_1 > 0 \]
and \eqref{eq5-18} can be estimated as
\[ \Big| \frac{h_{111}}{\kappa_1 \ln \kappa_1} \Big| \geq \,\frac{\beta c_1 \,\kappa_1 }{2 (\nu^{n+1})^2} - \frac{C}{\rho} \geq \, \frac{\beta c_1 \,\kappa_1 }{4 (\nu^{n+1})^2}\]
when $\beta >> \alpha$ and $\kappa_1 \rho$ is sufficiently large. Taking this into \eqref{eq5-15} and observing that
\[ \sigma_2^{11} \kappa_1^2 \geq \frac{9}{10 \,n} \sigma_2 \,\sigma_1 \]
as $\kappa_1$ is sufficiently large,
we then obtain an upper bound for $\rho^2 \ln \kappa_1$.

\vspace{5mm}

\end{document}